\newtheorem{theorem}{Theorem}[section]
\newtheorem{lemma}[theorem]{Lemma}
\newtheorem{proposition}[theorem]{Proposition}
\newtheorem{corollary}[theorem]{Corollary}
\newtheorem{remark}[theorem]{Remark}
\newtheorem{definition}[theorem]{Definition}
\begin{document}

\numberwithin{equation}{section}

\title{Regularizing Effect for a Class of Maxwell-Schr\"odinger Systems}
\author{Ayana Pinheiro de Castro Santana and Lu\'is Henrique de Miranda}
%\author{Author 1 \footnote{ Universidade de Brasília,
		% Universidade de Bras\'{\i}lia,
		% Department of Mathematics,
		% 70910-900, Bras\'{\i}lia-DF, Brazil, ayana.santana@gmail.com; (site) Partially supported by CAPES/Brazil-Finance Code 001.
		% } \qquad Keti Miranda\footnote{ Universidade de Bras\'{\i}lia,
		% Department of Mathematics,
		% 70910-900, Bras\'{\i}lia-DF, Brazil, (email) Partially supported by CNPq Proc. 312462/2014-0, Ministry of Science and Technology, Brazil and CAPES/Brazil-Finance Code 001.}
	
	%  and FAPDF/Brazil grant 0193.001346/2016.
	
	% }

\date{}

\maketitle{}

\begin{abstract}
	\hspace{-.5cm} In this paper we prove existence and regularity of weak solutions for the following system
	\begin{align*}
		\begin{cases}
			-\mbox{div}(M(x)\nabla u) + g(x,u,v) = f \ \ \mbox{in} \ \ \Omega\\
			-\mbox{div}(M(x)\nabla v) = h(x,u,v)  \ \ \mbox{in} \ \ \Omega\\
			\ \ \ \ \ u=v=0 \ \ \mbox{on} \ \ \partial \Omega,
		\end{cases}
	\end{align*}
	where $\Omega$ is an open bounded subset of $\mathbb{R}^N$, for $N>2$,  $f\in L^m(\Omega)$, $m>1$ and $g$, $h$ are two Carath\'eodory functions. We prove that under appropriate conditions on $g$ and $h$, there exist solutions which escape the predicted regularity by the classical Stampacchia's theory causing the so-called regularizing effect.

%\begin{abstract}
%A rigidity result for a class of compact generalized quasi-Einstein manifolds  with constant scalar curvature is obtained. Moreover, under some geometric assumptions, the rigidity for the noncompact case is also proved. Considering non constant scalar curvature, we characterize the generalized quasi-Einstein manifolds which is conformal to the Euclidean space and we show that there exist two classes of complete manifolds, which  are obtained by considering potential functions and conformal factors either to be radial  or invariant under the action of an (n-1) dimensional translation group. Explicit examples are given.
%\\  \\
%\noindent
%{\em Keywords: }
%Generalized quasi-Einstein Manifolds, Einstein type manifolds, Rigidity results, Conformally flat complete manifolds.   \\
%\noindent
{\em Mathematics Subject Classification 2020: } Primary: 35B65, 35D30; Secondary: 35B45,35D99.
\end{abstract}

\maketitle

%for our system and address the so-called regularizing effect due to the coupling between the equations in \eqref{P}
%In the present paper, we are interested in the investigation of the regularity properties for the positive solutions
\section{Introduction}\label{Intro}
In the present work, we investigate existence and regularity of positive solutions to the system
\begin{align*}
\tag{P}\label{P}
		\begin{cases}
			-\mbox{div}(M(x)\nabla u) + g(x,u,v) = f \ \ \mbox{in} \ \ \Omega\\
			-\mbox{div}(M(x)\nabla v) = h(x,u,v)  \ \ \mbox{in} \ \ \Omega\\
			\ \ \ \ \ u=v=0 \ \ \mbox{on} \ \ \partial \Omega,
		\end{cases}
	\end{align*}
%Where $\Omega \subset\mathbb{R}^{N}$ is an open  bounded subset, with $N\geqslant3$, $f\in L^m(\Omega)$ with $m\geqslant1$ and $r>1$.
which under adequate assumptions, encompasses the so--called Maxwell-Schr\"odinger system. The general idea regarding these systems is that due to the strong coupling between both equations, solutions have zones where they are more regular than expected from the classical regularity theory, the so--called Regularizing Effect. %The existence of this phenomenon for Maxwell-Schr\"odinger systems was discovered  in \cite{Boccardo1} by L. Boccardo and since then has been addressed by D. Arcoya, L. Orsina, and R. Durastani, among others, where we refer the reader to  \cite{ABO, Boccardo0, Boccardo2, Boccardo3, Boccardo4, Durastanti} and the references therein.
 This phenomenon has been studied after \cite{Benci} by V. Benci and Fortunato, where the problem was first addressed, and later on in since the seminal work \cite{Boccardo1} followed by several interesting contributions \cite{ABO, Boccardo0, Boccardo2, Boccardo3, Boccardo4, Durastanti} produced by Boccardo, Orsina, Arcoya, Durastanti, among others.
Following \cite{Boccardo1}, %The basic idea is that the solutions to a certain class of problems are more regular than what would be guaranteed by the standard regularity results for the decoupled equations of its system.
for instance, it happens that even when the data $f$ is very irregular, e.g.,  $f \notin (L^{2^*}(\Omega))^\prime$, it is possible to guarantee the existence of energy solutions in $W^{1,2}_0(\Omega)$, see also \cite{Boccardo2} and the references therein.

In other to clarify the ideas and to present some of the background concerning Regularizing Effects for Maxwell--Schr\"odinger equations, let us briefly discuss the papers which have most inspired the present work, namely \cite{Boccardo1, Boccardo2, Boccardo} and \cite{Durastanti}. We start by considering the following  Dirichlet problem
\begin{align*}\tag{1.1}\label{P0}
\begin{cases}
&-\mbox{div}(M(x)\nabla u) + A v |u|^{r-2}u=f, \ \ \  \ u \ \ \mbox{in} \ W^{1,2}_0(\Omega); \\
&-\mbox{div}(M(x)\nabla v) = |u|^r,\  \ \ \ \ \  \ \ \ \ \  \ \ \  \ v  \ \ \mbox{in} \ W^{1,2}_0(\Omega).
\end{cases}
\end{align*}\
for $f \in L^m(\Omega)$, with $m \geqslant 1$, $A>0$, $M(x)$ is a  uniformly elliptic bounded measurable symmetric matrix, and $\Omega\subset \mathbb{R}^N$, $N>2$, is an open bounded domain.  %such that $M(x)\xi\cdot \xi \geqslant \alpha|\xi|^2$, $|M(x)|\leqslant \beta$ for almost every $x$ in $\Omega$ and every $\xi$ in $\mathbb{R}^N$, with $0<\alpha\leqslant\beta$.
In \cite{Boccardo1}, for a more general class of equations, the author proved that if $f\in L^m(\Omega)$, with $m \geqslant (2^*)^{'}$, then there exists an energy solution $(u,v) \in W^{1,2}_0(\Omega)\times W^{1,2}_0 (\Omega)$, despite that the right-hand side of the second equation does not belong to the dual space $W^{-1,2}(\Omega)$. For this, the strategy is first to obtain a priori estimates by a clever choice of test functions to an approximated  variational PDE, then the proof is finished by means of standard compactness arguments. Moreover, for the specific case of \eqref{P0}, the author found out additional regularizing  zones for the parameters. As a matter of fact, it was shown that for $2 \leqslant m < \frac{(r-1)N}{2r}$, with $r>\frac{2^{*}}{2}$,  then $u\in L^{rm}(\Omega)$. Remark that, in the light of the classical Stampacchia's theory, see \cite{Stam1,Stam2}, by regarding $u$ solely as the solution of the first equation of \eqref{P0}, its expected regularity would be $L^{m^{**}}(\Omega)$. However, under the latter conditions $rm> m^{**}$, so that the coupling in \eqref{P0} gives $u$ some extra regularity. Later on, in \cite{Boccardo2}, the authors refined the latter result. Indeed, they proved that:
\begin{itemize}
    \item[(i)] if $r>2^{*}$ and $f \in L^m(\Omega)$, with $r'\leqslant m < (2^{*})'$, then \eqref{P0} has a solution $(u,v) \in W^{1,2}_0(\Omega)\times W^{1,2}_0(\Omega)$ where $u\in L^{\tau_1}(\Omega)$ and $\tau_1=\max(m(r-1),m^{**})$;
    \item[(ii)] if $1<r<2^*$ and $\max\bigg( \frac{Nr}{N+2r},1\bigg)<m < (2^{*})'$, then \eqref{P0} has a solution $(u,v) \in W^{1,m^*}_0(\Omega)\times W^{1,\tau_2}_0(\Omega)$ where $\tau_2=\min\big(\frac{Nm}{Nr-2mr-m},2\big)$.
\end{itemize}
Succeeding, \cite{Boccardo1,Boccardo2}, in  \cite{Durastanti}, the author proposes the study of the following nonlinear elliptic system
\begin{align}\tag{1.2}\label{PP0}
\begin{cases}
&-\mbox{div}(|\nabla u|^{p-2}\nabla u) + A v^{\theta+1} |u|^{r-2}u=f, \ \ \  \ u \geqslant 0 \ \ \mbox{in} \ W^{1,p}_0(\Omega); \\
&-\mbox{div}(|\nabla v|^{p-2}\nabla v) = |u|^rv^\theta,\  \ \ \ \ \  \ \ \ \ \  \ \ \  \ v \geqslant 0 \ \ \mbox{in} \ W^{1,p}_0(\Omega),
\end{cases}
\end{align}
where $1< p <N$ and $0\leqslant \theta< p-1$. Remark that although it is a $p$-Laplacian system, for $\theta =0 $ its zeroth order nonlinear term reduce to \eqref{P0}. We stress that, for the case $\theta = 0$, the author shows existence and Regularizing Effects even the source $f$ not belonging to $W^{-1,p^\prime}(\Omega)$. Actually, the author proves that if
 $f \in L^m(\Omega)$ with $ (r+1)^\prime \leqslant m < (p^*)^\prime$ there exists a weak
solution $(u, v) \in W^{1,p}_0(\Omega) \times W^{1,p}_0(\Omega)$ of system \eqref{PP0}. Further, the author also shows  existence in the dual case, for $f \in L^m(\Omega)$ with $m \geqslant (p^*)^\prime$, there exists $(u, v) \in W^{1,p}_0(\Omega) \times W^{1,p}_0(\Omega)$ solution, with $A>0$ , $r>1$ and $0\leqslant \theta < p -1$. Let us remark that if $\theta \in (0,1)$,  the second equation in \eqref{PP0} is sublinear, raising additional difficulties to the problem, see \cite{brezis, BocOr1994, DuraOliv2022} and the references therein.

%Nevertheless, the method used to prove the results established in \cite{Durastanti} were not sufficient to guarantee a regularizing effect for the regularity of the solution of the fist equation  in the case where $\theta>0$.\\

In addition to the latter results, there have been other contributions to the investigation of regularizing effects phenomena in general. For instance, without the ambition of being complete, we refer the reader to \cite{Boccardo3}, which is divided in two parts, where the first one consists in a survey for the theory and in the second one some contributions for different classes of Dirichlet systems are presented. Further, we mention  \cite{Boccardo4}, for regularizing effects of positive solutions of a symmetric version of \eqref{P0}, or \cite{ABO} where under an interesting growth assumption for the source term the authors obtain extra regularity for the solutions.

Regarding  the present paper, we have decided to address \eqref{P} and to revisit part of the questions raised in the past theory adapted for our problem.
If on one hand, sometimes our results are valid for a different kind of differential operators, e.g. if we contrast to \cite{Durastanti}, on the other hand, our contributions are valid for a different class of zero order nonlinearities, e.g., see \cite{Boccardo4}. Yet, despite that we deal with a slightly different type of system, we tried to investigate  certain aspects of the theory which were not fully disclosed, at least for \eqref{P}, to the best of our knowledge. More precisely, in order to better explain our main results, late us state our basic hypotheses.
\subsection{Assumptions}

%\begin{align*}
%\begin{cases}
%&-\mbox{div}(M(x)\nabla u)  + h(x,u,v)=f \  \ \ %\mbox{in} \ \Omega; \\
%&-\mbox{div}(M(x)\nabla v)  = g(x,u,v) \  \ \ \ %\mbox{in} \ \Omega; \\
%&u=v=0 \ \mbox{on} \ \partial\Omega.
%\end{cases}
%\end{align*}\
%For us, $\Omega \subset\mathbb{R}^{N}$ is an open  bounded subset, with $N\geqslant3$, $f\in L^m(\Omega)$ with $m\geqslant1$, $r>1$.

Below, we describe the basic assumptions for our manuscript. Indeed, throughout the entire text we will always assume that
$\Omega \subset\mathbb{R}^{N}$ is an open  bounded subset, where $N>2$. Remark that ask no smoothness on $\partial \Omega$. We also consider the real parameters $r>1$ and $0<\theta <\min \big\{\frac{4}{N-2},1\big\}$, and we take $f\in L^m(\Omega)$, for $m> 1$.

Regarding the semilinear part of System  \eqref{P}, we consider that  $g, h:\Omega\times\mathbb{R}\times\mathbb{R}\to \mathbb{R}$ are both Carath\'{e}odory  satisfying the properties below:
\begin{enumerate}
\item[(a)]\label{P_1} there exist $c_1$, $c_2>0$ such that
\begin{align}
\label{P1}\tag{$\mbox{P}_1$}
c_1|s|^{r-1}|t|^{\theta+1} \leqslant |g(x,s,t)| \leqslant c_2|s|^{r-1}|t|^{\theta+1};
\end{align}
\item[(b)] $g(x,s,t)$ is monotone in $s$, i.e., \begin{align}\label{P2}\tag{$\mbox{P}_2$}
(g(x,s_1,t)-g(x,s_2,t))(s_1-s_2)\geqslant0 \ \ \forall \ s_1,s_2 \in\mathbb{R}, t \geqslant 0  \ \ \mbox{a.e.}\  x\ \mbox{in} \ \Omega;
\end{align}
\item[(c)] there exist $d_1$, $d_2>0$ such that
\begin{align}
\label{P3}\tag{$\mbox{P}_3$}
d_1|s|^{r}|t|^{\theta} \leqslant |h(x,s,t)| \leqslant d_2|s|^{r}|t|^{\theta};
\end{align}
\item[(d)] $h(.,.,.)$ is non-negative
\begin{align}\label{P4}\tag{$\mbox{P}_4$}
h(x,s,t)\geqslant0 \ \ \forall \ s,t \in\mathbb{R}, \ \ \mbox{a.e.}\  x\ \mbox{in} \ \Omega.
\end{align}
%\item[(e)] $M(x)$ is a symmetric measurable matrix such that there exists $ \alpha ,\beta \in \mathbb{R}^+$ satisfying
\end{enumerate}
Remark that by \eqref{P1}, $g(x,0,t)=0$ for all $t\in \mathbb{R}$ and a.e. $x\in \Omega$, so that by \eqref{P2}, there holds that
\begin{equation}
\label{P22}\tag{$\mbox{P}_2^\prime$}
g(x,s,t)s\geqslant0 \ \ \forall \ s \in\mathbb{R}, t \geqslant 0 \ \ \mbox{a.e.}\  x\ \mbox{in} \ \Omega;
\end{equation}
Regarding the differential operators in \eqref{P}, we assume that $M(x)$ is a symmetric measurable matrix such that $M \in W^{1,\infty}(\Omega)$ and there exist $ \alpha ,\beta \in \mathbb{R}^+$ satisfying
\begin{align}\label{P5}\tag{$\mbox{P}_5$}
\alpha|\xi|^2 \leqslant M(x)\xi \cdot \xi \ , \ |M(x)|\leqslant \beta\ \
\mbox{for every}  \ \ \xi\in \mathbb{R}^N.
\end{align}

At this point, we must emphasize that as a natural example of nonlinearities satisfying \eqref{P1}-\eqref{P4} we may consider $g(x,s,t)=|s|^{r-2}s t$ and $h(x,s,t)=|s|^r$, the classical Maxwell-Schr\"odinger case, or $g(x,s,t)=|s|^{r-2} |t|^{\theta}st$ and $h(x,s,t)=|s|^r|t|^\theta$. For another interesting example, consider $\eta>0$, $V_i(x)$,  bounded and measurable, such that $V_i(x)\geqslant e_i>0$ a.e. in $\Omega$, $i=1,2$, and then set
\[ g(x,s,t)= \dfrac{V_1(x) |s|^{r-2}  |t|^\theta (s^2(\eta+1)+\eta t^2)s t}{s^2+t^2} \ \mbox{ and } h(x,s,t)= \dfrac{V_2(x) |s|^r|t|^{\theta}(s^2+t^2(\eta+1))}{s^2+t^2}.\]

Concerning problem \eqref{P}, in our manuscript, we will consider the following definition of solution:
\begin{definition}\label{defi1}
We say that $(u,v)$ in $W^{1,t}_{0}(\Omega)\times W^{1,t}_{0}(\Omega)$,  for $t\geqslant 1$, is a distributional solution for problem \eqref{P} if and only if
\begin{align*}\tag{$P_{F}$}\label{P_F}
\begin{cases}
&\int_{\Omega}M(x)\nabla u\cdot\nabla\varphi  + \int_{\Omega} g(x,u,v)\varphi = \int_{\Omega} f\varphi \ \ \ \ \forall\ \varphi \in C^\infty_c(\Omega)\\
& \int_{\Omega}M(x)\nabla v\cdot\nabla\psi = \int_{\Omega} h(x,u,v)\psi \ \ \ \ \ \forall\ \psi \in C^\infty_c(\Omega).
\end{cases}
\end{align*}
where $g(.,u,v)\in L^1_{loc}(\Omega)$ and $h(.,u,v)\in L^1_{loc}(\Omega)$.
\end{definition}

From now on, $C>0$ will denote a general constant, which may vary from line to line, and may depend only on the data, i.e.,
$C=C\big(\alpha,\beta,\theta, \Omega, c_1,c_2,d_1,d_2,r ,N\big)>0$. Sometimes, in order to simplify the notation, we will denote $C=C(f)>0$ in order to stress that $C$ depends on $\|f\|_{L^m}$.

For us, given $k>0$, $T_k$ and $G_k$ stand for the standard Stampacchia's truncations, i.e., \[T_k(s)=\max(-k,\min(k,s)) \mbox{ and } G_k(s)=s-T_k(s).\]

It is worthy to mention that due to \eqref{P1} and \eqref{P3}, system \eqref{P} includes sublinear terms, for instance see \cite{BocOr1994,DuraOliv2022}.
Indeed, in \cite{BocOr1994} the authors prove  existence of a solution $u\in W^{1,2}_0(\Omega)$ for

\begin{align*}
    \label{SUBLINEAR} \tag{$S$}
\begin{cases}
-\mbox{div}\big(M(x)\nabla u\big)&= \rho(x) u \mbox{ in } \Omega\\   \ \ \ \ \ \ \ \   \ \ \ \ \ \ \ \ \ u &= 0 \mbox{ on }\Omega,
\end{cases}
\end{align*}
where $\rho\geqslant 0$ a.e. in $\Omega$, $\rho \in L^m(\Omega)$ and $m \geqslant \bigg(\dfrac{2^*}{1+\theta}\bigg)^\prime$. Later on, a singular version of \eqref{SUBLINEAR} with a $p$-Laplacian structure is addressed, see \cite{DuraOliv2022}.

Now we are in position to introduce our main contributions.

\subsection{Main results}\label{mainresults}

Our main contributions are twofold. First we consider nonlinearities which are more general and second we address the regularizing effect in the case where there is the presence of a second parameter of coupling on the nonlinearities, as it was conjectured in \cite{Durastanti}. However, as it turned out, we discover the presence of a ramification on the gain of regularity depending on the interplay between the data and the coupling parameters, below the known results for systems related to \eqref{P}, see Theorem \ref{principal} below. We point out that, in order to do that, we consider the Laplacian--like version of the Maxwell--Schr\"odinger system. We also introduce a definition which, in our view, slightly simplifies the explanation of the concept of gain of regularity.

In the literature, we say that there exist  regularizing effects in a solution of a problem or a system, whenever its regularity escapes the predicted one according to  the standard Stampacchia or Calder\'on--Zygmund theories. %order to simplify the OTo emphasize the regularity gains obtained in the solutions $u$ and $v$ of our problem, we will patent a definition that will establish conditions necessary for us to gain regularity better than that stressed by the classical Stamppachia theory.
In order to summarize this justification, we introduce the following definition on ``regularized solutions".

\begin{definition}\label{def2.1}
   Let $F \in L^m(\Omega)$ where $1\leqslant m < \frac{N}{2}$. Consider $w$ a distributional solution of
   \begin{align}\label{d2.1}
       -\mbox{div}(M(x)\nabla w)=F(x).
   \end{align}
   \begin{itemize}
       \item [a)] If $w \in L^s(\Omega)$ where $s>m^{**}$ we say $w$ is Lebesgue regularized.
       \item[b)] If $w \in W^{1,t}_0(\Omega)$ where $t> m^*$ we say that $w$ is Sobolev regularized.
   \end{itemize}
\end{definition}
    Despite that the justification of the latter definition is tacit, for the convenience of the reader we explain it. Indeed, for instance, we know by Stamppachia's classical regularity theory that, if $F\in L^m(\Omega)$ with $1\leqslant m < \frac{N}{2}$, then the distributional solution of problem \eqref{d2.1} belongs to $L^{m^{**}}(\Omega)$, for instance see \cite{Boccardo} Chapters 6 and 11. Thus, if $w\in L^s(\Omega)$ with $s>m^{**}$, then $L^s(\Omega)\hookrightarrow L^{m^{**}}(\Omega)$,  properly. In this case, we have regularizing effect for the Lebesgue summability of the solution $w$, or in short, $w$ is Lebesgue regularized. Furthermore, regarding the regularity of the gradients, there are two basic scenarios. If $1\leqslant m \leqslant (2^*)^\prime$ then once again from Stampacchia's theory if $w$ solves \eqref{d2.1} then $w\in W^{1,m^*}_0(\Omega)$. Moreover, if $(2^*)^\prime < m <\frac{N}{2}$ and if $\partial \Omega$ and $M(x)$ are sufficiently smooth, then by the Calder\'{o}n--Zygmund theory, see \cite{Chen} Chapters 5 and 10, we have   $w\in W^{1,m^*}_0(\Omega)$. Finally, remark that the  restriction $1\leqslant m < \frac{N}{2}$ is considered in order to stay away from known issues concerning the regularity of the gradients when $m>\frac{N}{2}$, for instance see \cite{Boccardo11}.

In our first theorem, we address the existence and higher regularity  for positive solutions of \eqref{P}  in the case that the summability of the source is above the threshold $(r+\theta+1)^{\prime}$.
\begin{theorem}\label{lemah}
Let $f \in L^m(\Omega)$, where $f \geqslant0$ a.e. in $\Omega$, $m\geqslant (r+\theta+1)'$, $r>1,$ and $0<\theta<\min\big(\frac{4}{N-2},1\big)$. Then there exists a  solution $(u,v)$ for \eqref{P}, with $u\in W^{1,2}_0(\Omega)\cap L^{r+\theta +1}(\Omega)$, $u\geqslant 0$ a.e. in $\Omega$ and  $v \in W^{1,2}_0(\Omega)$, $v\geqslant 0$ a.e in $\Omega$.

%\begin{align}\label{TI}
%\int_{\{|u_k|>n\}} |u_k|^{r-1}|v_k|^{\theta+1} \leqslant \frac{1}{c_1}\int_{\{|u_k|>n\}} |f|,
%\end{align}
\end{theorem}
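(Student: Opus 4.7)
My plan is to build the solution by the standard regularize/compactness scheme \`a la Boccardo, with an a priori estimate tailored to the coupling structure.

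\emph{Step 1 (Approximation).} For each $n \in \mathbb{N}$ set $f_n = T_n(f)$, $g_n(x,s,t) = T_n(g(x,s,t))$ and $h_n(x,s,t) = T_n(h(x,s,t))$; since the nonlinearities are now bounded and $f_n \in L^\infty(\Omega)$, a Schauder fixed point argument applied to the decoupled iteration (first solve the second equation given $u$, then solve the first given $v$) yields $(u_n,v_n) \in W^{1,2}_0(\Omega) \times W^{1,2}_0(\Omega)$ solving the approximated system.

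\emph{Step 2 (Positivity).} Testing the second equation with $-v_n^{-} \in W^{1,2}_0(\Omega)$ and using $h_n \geq 0$ (consequence of \eqref{P4}) gives $v_n \geq 0$ almost everywhere. Next, test the first equation with $-u_n^{-}$: since $f \geq 0$ and, by \eqref{P22}, $g_n(x,u_n,v_n)\,u_n^{-} \leq 0$ pointwise when $v_n \geq 0$, we conclude $u_n \geq 0$ almost everywhere.

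\emph{Step 3 (A priori estimates).} This is the heart of the proof. Using $u_n$ and $v_n$ as test functions in the first and second equations respectively yields, thanks to \eqref{P1}, \eqref{P3} and \eqref{P5},
\[
\alpha \|\nabla u_n\|_2^2 + c_1 \int_\Omega u_n^r v_n^{\theta+1} \leq \|f\|_{L^m} \|u_n\|_{L^{m'}}, \qquad \alpha \|\nabla v_n\|_2^2 \leq d_2 \int_\Omega u_n^r v_n^{\theta+1}.
\]
Since $m \geq (r+\theta+1)'$ and $|\Omega| < \infty$, the inclusion $L^{r+\theta+1}(\Omega) \hookrightarrow L^{m'}(\Omega)$ gives $\|u_n\|_{L^{m'}} \leq C\|u_n\|_{L^{r+\theta+1}}$. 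An application of H\"older to $\int u_n^r v_n^{\theta+1}$ with conjugate exponents tied to the Sobolev scale, followed by the embedding $W^{1,2}_0(\Omega)\hookrightarrow L^{2^*}(\Omega)$ applied to $v_n$, produces a feedback between $\|u_n\|_{L^{r+\theta+1}}$ and $\|\nabla v_n\|_2$. This feedback can be inverted precisely under the constraints $\theta < 1$ (so that the exponent $1-\theta$ on $\|\nabla v_n\|_2$ is positive and one can solve for it) and $\theta < 4/(N-2)$ (so that $v_n^{\theta+1}$ lives in the appropriate Lebesgue space for the dual pairing against the Sobolev exponent). Plugging back and combining with the first inequality produces a uniform bound
\[
\|\nabla u_n\|_2 + \|\nabla v_n\|_2 + \|u_n\|_{L^{r+\theta+1}} \leq C\bigl(\|f\|_{L^m}\bigr).
\]

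\emph{Step 4 (Passing to the limit).} By Rellich compactness and the uniform bounds, up to a subsequence $(u_n,v_n) \rightharpoonup (u,v)$ weakly in $W^{1,2}_0(\Omega) \times W^{1,2}_0(\Omega)$, strongly in $L^q(\Omega)$ for every $q<2^*$, and pointwise almost everywhere; $u,v\geq 0$ passes to the limit. The uniform $L^{r+\theta+1}$ bound on $u_n$ together with the $W^{1,2}_0$ bound on $v_n$ and the growth conditions \eqref{P1}, \eqref{P3} yield equi-integrability of $g_n(\cdot,u_n,v_n)$ and $h_n(\cdot,u_n,v_n)$ on compact subsets of $\Omega$. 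Vitali's theorem then gives convergence of these nonlinearities in $L^1_{\mathrm{loc}}(\Omega)$, enough to pass to the limit in the distributional formulation \eqref{P_F}.

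The principal obstacle is Step 3: making the H\"older/Sobolev bootstrap between the two equations actually terminate in a uniform bound, which is exactly where the two technical assumptions on $\theta$ enter in an essential way. The other steps are standard, but care is required in the equi-integrability argument because the source $f$ is only in $L^m$ with $m$ possibly strictly below $(2^*)'$.
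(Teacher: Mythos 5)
Your overall scheme — approximate, derive a priori bounds, pass to the limit via Vitali — matches the paper's, and Steps~1, 2 and 4 are essentially sound in outline (modulo the fact that the paper also adds a vanishing constant $1/\tau_k$ to the second equation so that the Strong Maximum Principle yields $v_k>0$ a.e., which is needed later). The problem is Step~3, where the crucial mechanism is missing.

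Testing the first equation with $u_n$ and the second with $v_n$ gives
\[
\alpha\|\nabla u_n\|_2^2+c_1\int_\Omega u_n^{r}v_n^{\theta+1}\leqslant \|f\|_{L^m}\|u_n\|_{L^{m'}}, \qquad \alpha\|\nabla v_n\|_2^2\leqslant d_2\int_\Omega u_n^{r}v_n^{\theta+1},
\]
but this chain never produces a bound on $\|u_n\|_{L^{r+\theta+1}}$. Sobolev only controls $\|u_n\|_{L^{2^*}}$, and the theorem is precisely interesting when $r+\theta+1>2^*$ (this is exactly the regime of Corollary~2.4). The H\"older/Sobolev manipulation you gesture at gives an \emph{upper} bound on $\int u_n^r v_n^{\theta+1}$ in terms of $\|u_n\|$ and $\|\nabla v_n\|_2$, which is the wrong direction: that term already sits on the favorable side of the inequality, and $\int u_n^r v_n^{\theta+1}$ cannot be bounded below by $\|u_n\|_{L^{r+\theta+1}}^{r+\theta+1}$ in general, since $v_n$ may vanish where $u_n$ is large. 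Your claimed ``feedback'' has no route to the $L^{r+\theta+1}$ estimate.

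What is actually needed is the paper's tailored test function $\psi=u_k^{\theta+1}(v_k+\varepsilon)^{-\theta}$ in the \emph{second} equation. Since $h\geqslant d_1 u^r v^\theta$, this produces $d_1\int_\Omega u_k^{r+\theta+1}v_k^\theta(v_k+\varepsilon)^{-\theta}$ on the left-hand side, and after a Young inequality that absorbs the cross term (split according to $u_k/(v_k+\varepsilon)\lessgtr 1$, where $\theta<1$ is used to dominate $(u_k/(v_k+\varepsilon))^{2\theta}$ by $(u_k/(v_k+\varepsilon))^{\theta+1}$), one sends $\varepsilon\to 0$ to obtain
\[
\int_\Omega u_k^{r+\theta+1}\leqslant C\Bigl(\|\nabla u_k\|_2^2+\|\nabla v_k\|_2^2\Bigr),
\]
using that $v_k>0$ a.e.\ so that $v_k^\theta(v_k+\varepsilon)^{-\theta}\to 1$. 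This, combined with the standard energy tests and Young's inequality, closes the bootstrap. Without it the a priori estimate does not terminate, and the remaining steps cannot be carried out. (Also note that the role of $\theta<\frac{4}{N-2}$ in the paper is to secure the $L^\infty$ bound on $v_k$ in the approximation, not the duality pairing you describe.)
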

Now, in the spirit of Definition \ref{def2.1}, we will detail the gain of regularity in Lebesgue or Sobolev spaces for the solutions of \eqref{P} given by Theorem \ref{lemah}.
%\begin{proof}
\begin{corollary}\label{C2.4}\
Let $(u,v)$ be the  solution of \eqref{P}, given by Theorem \ref{lemah}.
\begin{itemize}
\item[(A)] If $2^*<r+\theta+1 $ and $(r+\theta+1)^{\prime}\leqslant m<(2^*)^{\prime}$, then $u$ is Sobolev regularized.
%Furthermore,  we have also a regularizing effect for the  Lebesgue summability of the solution $u$, since $r+\theta+1 > 2^* > m^{**}$,  that is, $L^{r+\theta+1}(\Omega) \subset L^{2^*}(\Omega) \subset L^{m^{**}}(\Omega)$ is the continuous immersion, where $m^{**}=\frac{Nm}{N-2m}$.
\item[(B)] If $2^*<r+\theta+1$ and
    $(2^*)^{\prime} \leqslant m < \frac{N(r+ \theta + 1)}{N+2(r+\theta+1)}$
then  $u$ is Lebesgue regularized.
%In this case, we get again a regularizing effect for the Lebesgue summability of the solution
%$u$.
%\item[(C)] If $r+\theta+1 < \Big(\frac{2^*}{\theta+1}\Big)^{\prime}$ and $(r+\theta+1)^{\prime}\leqslant m<(2^*)^{\prime}$ then $v$ is Lebesgue and Sobolev regularized.
\item[(C)] If $2^*<r+\theta+1 \leqslant\frac{2^*(\theta+1)}{\theta}$ then $v$ is Sobolev regularized.
\end{itemize}
\end{corollary}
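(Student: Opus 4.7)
The plan is to check each item by comparing the regularity of $u$ (resp.\ $v$) obtained in Theorem~\ref{lemah} against the Stampacchia threshold of Definition~\ref{def2.1}, applied with the natural source of each equation of \eqref{P}: $f$ for the $u$-equation and $h(x,u,v)$ for the $v$-equation.

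Parts (A) and (B) are straightforward once the source of the first equation is identified as $f \in L^m(\Omega)$. A direct algebraic manipulation gives
\[
m < (2^*)' \;\Longleftrightarrow\; m^* < 2,
\qquad
m < \frac{N(r+\theta+1)}{N+2(r+\theta+1)} \;\Longleftrightarrow\; m^{**} < r+\theta+1.
\]
Since Theorem~\ref{lemah} provides $u \in W^{1,2}_0(\Omega) \cap L^{r+\theta+1}(\Omega)$, Part (A) follows by choosing $t=2$ and Part (B) by choosing $s=r+\theta+1$ in Definition~\ref{def2.1}. The lower bounds on $m$ in the hypotheses, together with the standing assumption $2^* < r+\theta+1$, only serve to ensure that the admissible intervals are non-empty and that $m\geqslant 1$.

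For Part (C), the natural source of the second equation is $h(x,u,v)$, which by \eqref{P3} satisfies $|h(x,u,v)| \leqslant d_2 |u|^r v^{\theta}$. Setting $q := r+\theta+1$ and combining $u \in L^q(\Omega)$ with the Sobolev embedding $v \in W^{1,2}_0(\Omega) \hookrightarrow L^{2^*}(\Omega)$, H\"older's inequality with the conjugate exponents $q/(rm_0)$ and $2^*/(\theta m_0)$ yields $h(x,u,v) \in L^{m_0}(\Omega)$, where $\frac{1}{m_0} = \frac{r}{q} + \frac{\theta}{2^*}$. Substituting $r = q - \theta - 1$ and using $\frac{1}{(2^*)'}=1-\frac{1}{2^*}$ gives the clean identity
\[
\frac{1}{m_0} - \frac{1}{(2^*)'} \;=\; (\theta+1)\left(\frac{1}{2^*} - \frac{1}{q}\right),
\]
so the hypothesis $q > 2^*$ forces $m_0 < (2^*)'$, i.e.\ $m_0^* < 2$. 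Since $v \in W^{1,2}_0(\Omega)$ already exceeds $W^{1,m_0^*}_0(\Omega)$, Definition~\ref{def2.1} gives that $v$ is Sobolev regularized.

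The only delicate point is verifying that the upper bound $r+\theta+1 \leqslant 2^*(\theta+1)/\theta$ in Part (C) is exactly what guarantees $m_0 \geqslant 1$, so that Definition~\ref{def2.1} genuinely applies; this reduces to the algebraic inequality $\frac{r}{q} + \frac{\theta}{2^*} \leqslant 1$, equivalently $\theta q \leqslant 2^*(\theta+1)$. Beyond that threshold the H\"older estimate above no longer places $h(x,u,v)$ in $L^1(\Omega)$ with the present information on $u$ and $v$, so a finer argument (e.g.\ bootstrapping on $v$) would be required to remain in the Stampacchia framework.
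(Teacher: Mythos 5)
Your proof is correct and follows essentially the same route as the paper: for (A) and (B) you translate the Stampacchia thresholds $m^*<2$ and $m^{**}<r+\theta+1$ into the stated restrictions on $m$ and apply Definition~\ref{def2.1} to the regularity $u\in W^{1,2}_0(\Omega)\cap L^{r+\theta+1}(\Omega)$ from Theorem~\ref{lemah}, and for (C) you interpolate $h(x,u,v)$ in $L^{m_0}$ with $\tfrac{1}{m_0}=\tfrac{r}{r+\theta+1}+\tfrac{\theta}{2^*}$ and check $1\leqslant m_0<(2^*)'$, which is exactly the paper's computation (your identity $\tfrac{1}{m_0}-\tfrac{1}{(2^*)'}=(\theta+1)\big(\tfrac{1}{2^*}-\tfrac{1}{r+\theta+1}\big)$ is a cleaner way to see the two inequalities the paper verifies). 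The only quibble is your remark that the lower bound on $m$ ``only'' ensures non-emptiness: $m\geqslant(r+\theta+1)'$ is also what makes Theorem~\ref{lemah} applicable in the first place, as you implicitly use.
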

%\begin{remark}
    %We note that if $\theta< \frac{4}{N-2}$, then $\Big(\frac{2^*}{\theta +1}\Big)^{\prime}<2^*$. Thus,  the results obtained from Theorem \ref{lemah} or Corollary \ref{C2.4}, guarantee that the regularizing effect on  $v$ occurs when $r+\theta+1<2^*$, unlike the regularizing effect obtained on \ $u$, which happens when $r+\theta+1>2^*$. This is summarized in the following figure:

 %\centering
%\includegraphics[scale=0.5]{Regu.png}
%\caption{Regularizing Effect}
%\label{intWL}
%\end{remark}

Regarding Theorem \ref{lemah}, we partially address  a conjecture left in \cite{Durastanti} by R. Durastanti, where the case $m\geqslant(r+\theta+1)^\prime$ was proposed. Indeed, we have proved that the Lebesgue regularity indicated in \cite{Durastanti} is achieved  for a class of  zeroth-order nonlinear terms dominated by two variable polynomials depending on both unknowns, i.e., $\theta>0$.
Remark that, if on one hand we have considered linear differential operators with Lipschitz coefficients instead of the $p$-Laplacian, on the other hand, our nonlinear coupling satisfies properties \eqref{P1} - \eqref{P4} .

In addition, our approach allows us to investigate another regime of regularity for the source term.
Our inquiry also considers existence and regularity of solutions for the case where the summability of the source is below $(r+\theta+1)^\prime$, i.e., we address problem \eqref{P}, if the source $f$ is nonnegative and belongs to $L^m(\Omega)$ with $\left(\frac{r-\theta+1}{1-2\theta}\right)'\leqslant m<(r+\theta+1)^\prime$.

\begin{theorem}\label{principal}
Let $f \in L^m(\Omega)$, where $f \geqslant0$ a.e. in $\Omega$,  $\left(\frac{r-\theta+1}{1-2\theta}\right)'\leqslant m<(r+\theta+1)^\prime$, $r>1$ and $0<\theta<\delta$, for $\delta=\min\big(\frac{N+2}{3N-2},\frac{4}{N-2},\frac{1}{2}\big)$.
%Let $r\geqslant\frac{N+2}{N-2}$ and let $0<\theta<\frac{1}{3}$ and $f\geqslant0$ belongs to $L^m(\Omega)$ with $\left(\frac{r-\theta+1}{1-2\theta}\right)'<m<(r+\theta+1)'$.
Then there exists a solution $(u,v)$ for \eqref{P},\linebreak with $u \in W^{1,p}_0(\Omega)\cap L^{r-\theta+1}(\Omega)$, $u\geqslant 0$ a.e. in $\Omega$ and $v \in W^{1,q}_0(\Omega)$, $v\geqslant 0$ a.e. in $\Omega$, where $p = \frac{2(r-\theta+1)}{(r+\theta+1)}$ and $q=\frac{2N(1-\theta)}{N-2\theta}$. Furthermore, if  $r\geqslant\frac{N+2}{N-2}$, then $(u,v) \in W^{1,q}_0(\Omega) \times W^{1,q}_0(\Omega)$.
\end{theorem}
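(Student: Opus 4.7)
The plan is to construct a solution as the limit of a sequence of regularized problems. Define $f_n=T_n(f)$ and truncate the Carath\'eodory nonlinearities, for instance via $g_n(x,s,t)=g(x,s,t)/(1+\tfrac{1}{n}|g(x,s,t)|)$ and similarly for $h_n$. For each $n$, a Schauder fixed-point argument of the type used in \cite{Boccardo1, Durastanti} produces a weak solution $(u_n,v_n)\in W^{1,2}_0(\Omega)\times W^{1,2}_0(\Omega)$ of the regularized system. Testing with $u_n^-$ and $v_n^-$ together with $f\geqslant 0$, \eqref{P22} and \eqref{P4} gives $u_n,v_n\geqslant 0$ a.e.\ in $\Omega$.

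The core of the argument is a coupled system of a priori estimates. Testing the first equation with a regularization of $u_n^{r-\theta}$, ellipticity of $M$ and \eqref{P1} yield
\begin{equation*}
\alpha(r-\theta)\int_\Omega u_n^{r-\theta-1}|\nabla u_n|^2 + c_1\int_\Omega u_n^{2r-\theta-1}v_n^{\theta+1}\leqslant\int_\Omega f_n u_n^{r-\theta},
\end{equation*}
so that $u_n^{(r-\theta+1)/2}$ is controlled in $W^{1,2}_0(\Omega)$ and, by Sobolev, $u_n$ is bounded in $L^{N(r-\theta+1)/(N-2)}(\Omega)$. Since $\theta<\tfrac{1}{2}$ by hypothesis, testing the second equation with a regularization of $v_n^{1-2\theta}$ and using \eqref{P3} gives
\begin{equation*}
\alpha(1-2\theta)\int_\Omega v_n^{-2\theta}|\nabla v_n|^2\leqslant d_2\int_\Omega u_n^{r}v_n^{1-\theta},
\end{equation*}
so that $v_n^{1-\theta}\in W^{1,2}_0(\Omega)$ uniformly and, again by Sobolev, $v_n$ is bounded in $L^{2N(1-\theta)/(N-2)}(\Omega)$. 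To close the system I would absorb $\int u_n^r v_n^{1-\theta}$ into the good coupling $\int u_n^{2r-\theta-1}v_n^{\theta+1}$ via Young's inequality, leaving a pure $v$-remainder whose exponent is admissible for the Sobolev bound on $v_n$ precisely when $\theta<\tfrac{N+2}{3N-2}$; and I would control $\int f_n u_n^{r-\theta}\leqslant\|f\|_{L^m}\|u_n\|_{L^{(r-\theta)m'}}^{r-\theta}$ via H\"older against the Sobolev bound on $u_n$. The threshold $m\geqslant\bigl(\tfrac{r-\theta+1}{1-2\theta}\bigr)'$ is precisely what renders the resulting inequality self-improving, of the form $A\leqslant C(1+A^\lambda)$ with $\lambda<1$, producing uniform estimates on the weighted integrals and on $\|u_n\|_{L^{r-\theta+1}(\Omega)}$.

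The weighted $L^2$-gradient estimates are next converted into Sobolev estimates by H\"older. Writing
\begin{equation*}
|\nabla u_n|^p=\bigl(u_n^{r-\theta-1}|\nabla u_n|^2\bigr)^{p/2}\,u_n^{-p(r-\theta-1)/2},\qquad p=\tfrac{2(r-\theta+1)}{r+\theta+1},
\end{equation*}
and applying H\"older with exponents $2/p$ and $2/(2-p)$ controls $\|\nabla u_n\|_{L^p(\Omega)}$; the negative power of $u_n$ is handled by the Stampacchia splitting $u_n=T_1(u_n)+G_1(u_n)$, since on $\{u_n\geqslant 1\}$ the negative power is harmless, while on $\{u_n<1\}$ the bound $\int|\nabla T_1(u_n)|^2\leqslant C$ follows from testing the first equation with $T_1(u_n)$. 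The analogous computation for $v_n$, with $q=\tfrac{2N(1-\theta)}{N-2\theta}$ calibrated so that $\int v_n^{2\theta q/(2-q)}=\int v_n^{2N(1-\theta)/(N-2)}$ matches the Sobolev bound above, gives $v_n$ bounded in $W^{1,q}_0(\Omega)$; the constraint $\theta<\tfrac{4}{N-2}$ is what keeps every exponent in the admissible range.

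Finally I would pass to the limit. Rellich-Kondrachov provides $u_n\to u$ and $v_n\to v$ strongly in $L^s$ for $s$ below the Sobolev exponents, with a.e.\ convergence. The uniform Lebesgue bounds combined with \eqref{P1} and \eqref{P3} yield equi-integrability of $g_n(\cdot,u_n,v_n)$ and $h_n(\cdot,u_n,v_n)$, so Vitali's theorem delivers $L^1_{\mathrm{loc}}$ convergence to the correct limits. A.e.\ convergence of the gradients, essential in the sub-quadratic regime $p,q<2$ to pass to the limit in the principal parts, is obtained by a Boccardo-Murat argument: testing the equation for $u_n-u$ with a truncation such as $T_k((u_n-u)^+)$ and exploiting \eqref{P2} together with the nonnegativity of $g$. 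Passing to the limit in the distributional formulation yields \eqref{P_F}, and nonnegativity of $(u,v)$ is preserved a.e. The improvement $(u,v)\in W^{1,q}_0(\Omega)\times W^{1,q}_0(\Omega)$ in the case $r\geqslant\tfrac{N+2}{N-2}$ comes from a second round of estimates: in that regime the coupling $\int u^{2r-\theta-1}v^{\theta+1}$ is strong enough that $g(\cdot,u,v)$ has the summability of a source for which the first equation, read as $-\mathrm{div}(M\nabla u)=f-g(\cdot,u,v)$, enjoys the same Stampacchia/Calder\'on-Zygmund regularity as the second, lifting $u$ into $W^{1,q}_0(\Omega)$. The main obstacle is the closure of the coupled a priori estimates of the second step: the precise thresholds on $\theta$ and $m$ in the hypothesis are exactly what make the chain of H\"older and Young inequalities close with an absorption exponent strictly less than~$1$.
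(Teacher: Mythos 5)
Your overall strategy (approximate, prove coupled a priori estimates, convert weighted gradient bounds into $W^{1,p}$/$W^{1,q}$ bounds by H\"older, pass to the limit) matches the paper's architecture, and the positivity step, the H\"older conversion with exponents $2/p$ and $2/(2-p)$, and the final equi-integrability/Vitali argument are all in the right spirit. The proposal fails, however, at the one step that the whole theorem hinges on: the choice of test function in the first equation.

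You test the first equation with (a regularization of) $u_n^{r-\theta}$, which yields
\begin{equation*}
\alpha(r-\theta)\int_\Omega u_n^{r-\theta-1}|\nabla u_n|^2 \leqslant \int_\Omega f_n u_n^{r-\theta}
\leqslant \|f\|_{L^m}\,\|u_n\|_{L^{(r-\theta)m'}}^{\,r-\theta}.
\end{equation*}
The left-hand side controls, via Sobolev, $\|u_n\|_{L^{N(r-\theta+1)/(N-2)}}^{\,r-\theta+1}$. For the inequality to be self-improving you therefore need $(r-\theta)m' \leqslant \tfrac{N(r-\theta+1)}{N-2}$. Under the standing hypothesis $m'\leqslant\tfrac{r-\theta+1}{1-2\theta}$, this is implied only if $\tfrac{r-\theta}{1-2\theta}\leqslant\tfrac{N}{N-2}$, i.e.\ $(r-\theta)(N-2)\leqslant N(1-2\theta)$ — a constraint that fails as soon as $r$ is not close to $1$, whereas the theorem must hold for every $r>1$. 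So the chain of H\"older/Young/Sobolev inequalities does \emph{not} close with this test function, and the claimed $A\leqslant C(1+A^\lambda)$, $\lambda<1$, structure is not available.

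What the paper does instead, and what is missing from your plan, is twofold. First, the first equation is tested with the \emph{sublinear} power $\varphi_\varepsilon=(u_k+\varepsilon)^{\gamma}-\varepsilon^{\gamma}$ with $\gamma=1-2\theta<1$, so that the source term becomes $\int f\,u_k^{1-2\theta}\leqslant\|f\|_{L^m}\|u_k\|_{L^{(1-2\theta)m'}}^{1-2\theta}$; here $(1-2\theta)m'\leqslant r-\theta+1$ is \emph{exactly} the hypothesis on $m$. Second — and this is the essential device — the $L^{r-\theta+1}$ norm of $u_k$ is not produced by the first-equation test at all, but by inserting the mixed test function $\psi=(u_k+\varepsilon)^{\gamma+\theta}(v_k+\varepsilon)^{-\theta}=(u_k+\varepsilon)^{1-\theta}(v_k+\varepsilon)^{-\theta}$ into the \emph{second} equation: the zeroth-order term of $h$ then yields, as $\varepsilon\to0$, precisely $\int u_k^{r-\theta+1}$, while the differential part is absorbed into the two weighted gradient integrals $\int|\nabla u_k|^2u_k^{-2\theta}$ and $\int|\nabla v_k|^2 v_k^{-2\theta}$ coming from the simple tests. (This mixed test also forces the choice $\gamma=1-2\theta$ so that the exponents $2(\gamma+\theta-1)$ and $\gamma-1$ match at $-2\theta$; and it explains why the construction insists on $v_k>0$ a.e., obtained from the $\tfrac{1}{\tau_k}$ perturbation and the strong maximum principle, so that the negative power of $v_k$ is legitimate.) Your proposal has no analogue of this mixed test function, and without it the coupling $\int u_n^r v_n^{1-\theta}$ cannot be controlled under the stated range of $m$.

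Two secondary remarks. Your absorption of $\int u_n^r v_n^{1-\theta}$ into $\int u_n^{2r-\theta-1}v_n^{\theta+1}$ by Young needs the latter, but that quantity is only controlled by $\int f_n u_n^{r-\theta}$, which is the very term that does not close — so this step is circular. And the invocation of Boccardo--Murat for a.e.\ convergence of $\nabla u_n$ is unnecessary here: the principal part is linear in $\nabla u$, so weak $W^{1,p}_0$ convergence already suffices to pass to the limit in $\int M\nabla u_n\cdot\nabla\varphi$ for $\varphi\in C^\infty_c(\Omega)$, as the paper does.
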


Once again, in the guidelines of Definition \ref{def2.1}, we now depict the regularizing effect zones guaranteed by Theorem \ref{principal}.
%olutions, for this such a test function is important to ensure , that is, to show the existence of a positive constant
%$C$ such that
%\begin{align*}
%    ||u_k||_{W^{1,2}_0} + ||v_k||_{W^{1,2}_0} + ||u_k||_{W^{1,p}_0} + ||v_k||_{W^{1,q}_0} \leqslant C.
%\end{align*}}

%\begin{proof}
%\textcolor{red}{\begin{corollary}\label{C2.4}\
%\begin{itemize}
%\item[(A)] If $r+\theta+1 >2^*$ and %$(r+\theta+1)^{\prime}<m<(2^*)^{\prime}$, then we get a regularizing effect in $u$. Furthermore,  we have also a regularizing effect for the  Lebesgue summability of the solution $u$, since $r+\theta+1 > 2^* > m^{**}$,  that is, $L^{r+\theta+1}(\Omega) \subset L^{2^*}(\Omega) \subset L^{m^{**}}(\Omega)$ is the continuous immersion, where $m^{**}=\frac{Nm}{N-2m}$.
%\item[(B)] If $r+\theta+1> 2^*$ and
 %   $(2^*)^{\prime} < m < \frac{N(r+ \theta + 1)}{N+2(r+\theta+1)}$.
%Then $r+\theta+1 > m^{**}$. In this case, we get again a regularizing effect for the Lebesgue summability of the solution
%$u$.
%\item[(C)] Let $(u,v)$ be the weak solution of \eqref{P}, given by Theorem\ref{lemah}. Then $g(x,u,v)$ belongs to $L^s(\Omega)$
%where $s=\frac{2^*(r+\theta+1)}{2^*+\theta(r+\theta+1)}$. In addition, we get
%\begin{align*}
 %   r+\theta+1 < \Big(\frac{2^*}{\theta+1}\Big)^{\prime} \iff s<(2^*)^{\prime}.
%\end{align*}
%Hence we have a regularizing effect for
%solution $v$.
%\end{itemize}
%\end{corollary}}
 %According to Definition \ref{def2.1} our next result exalts the gain  of regularity  the solutions given by the Theorem \ref{principal}.
\begin{corollary}\label{C2.6}
Let $(u,v)$ be the  solution of \eqref{P}, given by Theorem \ref{principal}
\begin{itemize}
%\item [(A)] If $r-\theta+1 > 2^*$ suppose
%that \begin{equation*} \frac{2N(r-\theta+1)}{N(r+\theta+1)+2(r-%\theta+1)} \leqslant m < (2^*)^{\prime}.
%\end{equation*} Then $u$ is Lebesgue regularized.
\item [(A)] If $r-\theta+1 > 2^*(1-\theta)$ suppose
that \begin{equation*} \Big(\frac{r-\theta+1}{1-2\theta}\Big)^{\prime} \leqslant m < \min \bigg(\frac{2N(r-\theta+1)}{N(r+\theta+1)+2(r-\theta+1)}, (r+\theta+1)^{\prime}, (2^*)^\prime \bigg).
\end{equation*} Then $u$ is Sobolev regularized.
%\item[(C)] If $2^*\geqslant r-\theta+1 >2^*(1-\theta)$ suppose that
%\begin{equation*} \Big(\frac{r-\theta+1}{1-2\theta}\Big)^{\prime} < m < \frac{2N(r-\theta+1)}{N(r+\theta+1)+2(r-\theta+1)}.
%\end{equation*}
%Then $u$ is Sobolev and Lebesgue regularized.
%\end{itemize}
\item[(B)] If $2^*(1-\theta)<r-\theta+1 \leqslant \dfrac{2^*(1-\theta)^2}{\theta}$ then $v$ is Sobolev regularized.
\end{itemize}
\end{corollary}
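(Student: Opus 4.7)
The plan is to verify both claims by a direct algebraic comparison of the Sobolev exponents furnished by Theorem \ref{principal} against the Stampacchia thresholds in Definition \ref{def2.1}. Throughout, $f \in L^m(\Omega)$ is the source of the first equation, while the source for the second equation is $h(x,u,v)$, whose summability must be extracted via H\"older from the integrability $u \in L^{r-\theta+1}(\Omega)$ and from applying a Sobolev embedding to $v \in W^{1,q}_0(\Omega)$.

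For part (A), Theorem \ref{principal} gives $u \in W^{1,p}_0(\Omega)$ with $p = \frac{2(r-\theta+1)}{r+\theta+1}$, so it suffices to show $p > m^{*}$. Cross-multiplying, the bound
\[ m < \frac{2N(r-\theta+1)}{N(r+\theta+1)+2(r-\theta+1)} \]
is algebraically equivalent to $\frac{Nm}{N-m} < \frac{2(r-\theta+1)}{r+\theta+1}$, i.e.\ $m^{*} < p$. The supplementary bounds $m < (r+\theta+1)^{\prime}$ and $m < (2^{*})^{\prime}$ keep $m$ inside the range where Theorem \ref{principal} applies and where $m < N/2$, so that the Stampacchia baseline in Definition \ref{def2.1} is the correct reference.

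For part (B), hypothesis \eqref{P3} gives $|h(x,u,v)| \leqslant d_2 |u|^r |v|^\theta$ and, exploiting the embedding $W^{1,q}_0(\Omega) \hookrightarrow L^{q^{*}}(\Omega)$ together with the explicit $q = \frac{2N(1-\theta)}{N-2\theta}$, a short arithmetic yields $q^{*} = 2^{*}(1-\theta)$. Combining this with $u \in L^{r-\theta+1}(\Omega)$ through H\"older, we conclude $h(x,u,v) \in L^{\bar m}(\Omega)$ with
\[ \frac{1}{\bar m} = \frac{r}{r-\theta+1} + \frac{\theta}{2^{*}(1-\theta)}. \]
The upper bound $r-\theta+1 \leqslant \frac{2^{*}(1-\theta)^2}{\theta}$ is precisely what ensures $\bar m \geqslant 1$, placing the $v$-equation within the scope of Definition \ref{def2.1}. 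It remains to show $q > \bar m^{*}$: substituting the expression for $\bar m$ and using the identity $(N+2)(1-\theta) = N + 2 - 2\theta - N\theta$, the inequality $\bar m^{*} < q$ simplifies exactly to $r-\theta+1 > 2^{*}(1-\theta)$, which is the remaining hypothesis.

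The argument is purely algebraic and presents no analytic difficulty once Theorem \ref{principal} is in hand. The only step requiring some care is the bookkeeping for part (B): one must simultaneously track that the upper bound $r-\theta+1 \leqslant \frac{2^{*}(1-\theta)^2}{\theta}$ guarantees $\bar m \geqslant 1$ (so that Definition \ref{def2.1} is applicable), while the strict lower bound $r-\theta+1 > 2^{*}(1-\theta)$ produces the strict inequality $q > \bar m^{*}$ identifying $v$ as Sobolev regularized.
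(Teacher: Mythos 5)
Your proposal is correct and takes essentially the same route as the paper: part (A) reduces to the algebraic equivalence $p>m^*\iff m<\frac{2N(r-\theta+1)}{N(r+\theta+1)+2(r-\theta+1)}$, and part (B) uses \eqref{P3} together with $u\in L^{r-\theta+1}(\Omega)$ and the embedding $W^{1,q}_0(\Omega)\hookrightarrow L^{2^*(1-\theta)}(\Omega)$ to place $h(x,u,v)$ in $L^{\bar m}(\Omega)$ with the same $\bar m$ the paper calls $s$, then verifies $q>\bar m^*$. Your check that $\bar m\geqslant 1$ follows from $r-\theta+1\leqslant \frac{2^*(1-\theta)^2}{\theta}$ is the correct direction of the inequality (the paper's proof text has a sign typo there, though its stated hypothesis is consistent with yours).
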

We will present proofs for Corollary \ref{C2.4} and Corollary \ref{C2.6}  in Section 5. For now, some remarks are in order.

\begin{remark}
%\begin{itemize}
%\item[(i)]
The intervals established for $m$ in Corollary  \ref{C2.6} are not empty. Indeed, for item (A), observe the following equivalences:
    \begin{itemize}
        \item [(i)]
        \begin{align*}
            \Big(\frac{r-\theta+1}{1-2\theta}\Big)^\prime < (2^*)^\prime \iff 2^*(1-2\theta)<r-\theta+1;
        \end{align*}
        \item[(ii)]
        \begin{align*}
              \Big(\frac{r-\theta+1}{1-2\theta}\Big)^\prime < \frac{2N(r-\theta+1)}{N(r+\theta +1)+2(r-\theta+1)} \iff 2^*(1-\theta)< r-\theta+1;
        \end{align*}
    \item[(iii)]
    \begin{align*}
              \frac{r-\theta+1}{r+\theta}=\Big(\frac{r-\theta+1}{1-2\theta}\Big)^\prime < (r+\theta+1)^\prime=\frac{r+\theta+1}{r+\theta}
              \ \ \forall \theta >0.
              \end{align*}
    \end{itemize}
Thus, in (i) and (ii), remark that we ask $r-\theta+1>2^*(1-\theta)$, while (iii) is obviously satisfied.
%\end{itemize}
%Since $  2^*(1-2\theta) <2^* < r-\theta+1 $ there follows

%Analogously, $2^*(1-\theta)<2^*<r-\theta+1$ there follows
Moreover, for item (B), observe that $2^*(1-\theta)< \dfrac{2^*(1-\theta)^2}{\theta}$, since $0<\theta<\frac{1}{2}$.

%In addition, the interval established for $m,$ in the third item of Corollary \ref{C2.6}, is also non-empty. Actually, if $r+\theta+1 \geqslant 2^*\geqslant r-\theta+1$, then there holds that $(r+\theta+1)^\prime < (2^*)^\prime$ and
%\begin{align*}
%r-\theta+1 < 2^* \iff \dfrac{N(r-\theta+1)}{N+2(r-\theta+1)} < (2^*)^\prime.
%\end{align*}
%Further,  if $2^* \geqslant r+\theta+1 > r-\theta+1$, then there holds that $(r+\theta+1)^\prime \geqslant (2^*)^\prime$ and \begin{align*}
%r-\theta+1 < 2^* \iff \dfrac{N(r-\theta+1)}{N+2(r-\theta+1)} < (2^*)^\prime.
%\end{align*}

%\item[(ii)]  We observe that for $r-\theta+1 > 2^*$, as $ r+\theta+1 > r-\theta+1$ then $r+\theta+1> 2^* \iff (r+\theta+1)^{\prime}< (2^*)^{\prime}$, in this case, the  hypothesis established in $m$ by Theorem \ref{principal}, allows us to conclude that $m<(r+\theta+1)^\prime < (2^*)^{\prime}$. This means that, if we take $r-\theta+1>2^*$, the intervals determined for $m$ in the first two items of the Corollary \ref{C2.6} are non-empty.
\end{remark}

Finally, let us stress that, inspired by the classical approach of the school of G. Stampacchia, L. Boccardo, among others, see \cite{Boccardo1,Boccardo0,Boccardo2,Stam1, Stam2} and the references therein, the main ingredient for our results is based on a carefully choice of tailored  test functions. Indeed, by means of subtle  modifications on the test functions  we are able to address the regimes of regularity described in Theorems \ref{lemah} and \ref{principal}, see Lemmas \ref{teo47} and \ref{lem4.10}.
After that, we follow the standard approach of determining a priori estimates for solutions of an approximate problem and then passing to the limit.

\section{Preliminaries}
In this section we provide certain technical results used in the present paper. Despite that not all of them are new, for the convenience of the reader, we give some proofs.
We begin by defining for $\tau>0$  the following truncations
%\begin{align*}
%h_k(x,t,s) =
%\begin{cases}
% h(x,t,s) \ \ \mbox{if} \ \ |h(x,t,s)|\leqslant k; \\
% k\ {\rm sgn}(h(x,t,s))\ \ \mbox{if} \ \ %|h(x,t,s)|>k;
%\end{cases}
%\end{align*}
%\begin{align*}
%g_k(x,t,s) =
%\begin{cases}
%g(x,t,s) \ \ \mbox{if} \ \ |g(x,t,s)|\leqslant k; \\
%k\ {\rm sgn}(g(x,t,s))\ \ \mbox{if} \ \ |g(x,t,s)|>k.
%\end{cases}
%\end{align*}
%
\begin{align}
\label{truncationshg}
\begin{cases}
g_\tau(x,t,s)=T_\tau \big(g(x,t,s)\big); \\
h_\tau(x,t,s)=T_\tau \big(h(x,t,s)\big).		
\end{cases}
\end{align}
Moreover,  it is clear that $g_\tau$ and $h_\tau$ satisfy the hypotheses \eqref{P22} and \eqref{P4} respectively.\\
%where $F_k\in L^\infty(\Omega),$ $|F_k|\leqslant|f|,$ $\forall\ n\in\mathbb{N}$ and $F_k\to f$ in $L^m(\Omega)$.\\

As a first step, we provide certain convergence results which will be employed in the proofs of our main results.  As it will be clear, by cropping certain technical details from  Theorem \ref{principal} and \ref{lemah} we simplify their proofs. We just mention that the difference between cases $(ii)$ and $(iii)$ in the next lemma, comes from the fact that, naturally,  when $f$ is less regular, the estimates on the mixed terms become also less regular, see Lemmas \ref{teo47} and \ref{lem4.10}.
\begin{lemma}\label{lem48}
Let $f\in L^m(\Omega)$ with $m\geqslant1$, $\{u_k\}$ bounded in $W^{1,s}_0(\Omega)$ and $\{v_k\}$ bounded in $W^{1,t}_0(\Omega)$, where  $s\geqslant t \geqslant \frac{N(\theta+1)}{N+\theta+1}$ and $h$, $g$ be two Carathéodory functions satisfying \eqref{P1} and \eqref{P3}. Then, there exist $u$ and $v$ in $W^{1,t}_0(\Omega)$ such that, up to subsequences relabeled the same:

\begin{enumerate}[label=\textit{(\roman*)}]
\item \label{lem48:1} If $\int_{\{|u_k|>n\}}|u_k|^{r-1}|v_k|^{\theta+1} \leqslant C\int_{\{|u_k|>n\}}|f|,$

then $g(x,u_k,v_k) \to g(x,u,v) \ \ {\rm in} \ L^1(\Omega)$.

\item \label{lem48:2} If $\int_{\{|v_k|>n\}}|u_k|^r|v_k|^{\theta+1}\leq C$ and  $\ \{|u_k|^r\} \ \mbox{is uniformly integrable}$

then $ h(x,u_k,v_k) \to h(x,u,v) \ \ {\rm in} \ L^1(\Omega)$.
\item \label{lem48:3} If $0<\theta < \frac{1}{2}$ and $\int_{\{|v_k|>n\}}|u_k|^r|v_k|^{1-\theta}\leq C$ and  $\ \{|u_k|^r\} \ \mbox{is uniformly integrable}$

then $ h(x,u_k,v_k) \to h(x,u,v) \ \ {\rm in} \ L^1(\Omega)$.
\end{enumerate}

\end{lemma}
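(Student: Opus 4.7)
The plan is to apply Vitali's convergence theorem in each of the three cases: combine pointwise a.e.\ convergence of the sequences with equi-integrability of $\{g(x,u_k,v_k)\}$ and $\{h(x,u_k,v_k)\}$ respectively. Since $\{u_k\}$ and $\{v_k\}$ are bounded in $W^{1,s}_0(\Omega)$ and $W^{1,t}_0(\Omega)$ with $s\geq t$, the Rellich--Kondrachov theorem allows us to extract subsequences (not relabelled) so that $u_k\to u$ strongly in $L^q$ for every $q<s^*$, $v_k\to v$ strongly in $L^q$ for every $q<t^*$, and both convergences hold a.e.\ in $\Omega$. Since $g,h$ are Carath\'eodory, we deduce $g(x,u_k,v_k)\to g(x,u,v)$ and $h(x,u_k,v_k)\to h(x,u,v)$ a.e.\ in $\Omega$. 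The hypothesis $t\geq N(\theta+1)/(N+\theta+1)$ is exactly $t^{*}\geq \theta+1$, so $\{|v_k|^{\theta+1}\}$ is bounded in $L^{t^{*}/(\theta+1)}$, hence equi-integrable by de la Vall\'ee-Poussin.

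For \ref{lem48:1}, given a measurable $E\subset\Omega$ and $n>0$, bounding using \eqref{P1} yields
\begin{align*}
\int_E|g(x,u_k,v_k)|\,\ud x \leq c_2 n^{r-1}\int_E|v_k|^{\theta+1}\,\ud x + c_2\int_{\{|u_k|>n\}}|u_k|^{r-1}|v_k|^{\theta+1}\,\ud x.
\end{align*}
By the hypothesis in \ref{lem48:1} the tail is at most $c_2 C\int_{\{|u_k|>n\}}|f|\,\ud x$; since Chebyshev gives $|\{|u_k|>n\}|\leq C/n^{s^{*}}$ uniformly in $k$, absolute continuity of $|f|\,\ud x$ makes this tail arbitrarily small for $n$ large, uniformly in $k$. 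The first term is small for $|E|$ small, uniformly in $k$, by the equi-integrability of $\{|v_k|^{\theta+1}\}$ just established. Vitali's theorem then yields the stated $L^1$-convergence.

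Parts \ref{lem48:2} and \ref{lem48:3} follow an analogous recipe but the splitting is done on the level sets of $v_k$. Using \eqref{P3},
\begin{align*}
\int_E|h(x,u_k,v_k)|\,\ud x \leq d_2 n^{\theta}\int_E|u_k|^r\,\ud x + d_2\int_{\{|v_k|>n\}}|u_k|^r|v_k|^{\theta}\,\ud x.
\end{align*}
The first integral is small for $|E|$ small by the assumed equi-integrability of $\{|u_k|^r\}$. For the tail in case \ref{lem48:2}, $|v_k|^{\theta}\leq n^{-1}|v_k|^{\theta+1}$ on $\{|v_k|>n\}$, so by hypothesis the tail is at most $d_2C/n$. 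For case \ref{lem48:3}, since $0<\theta<1/2$ we have $2\theta-1<0$ and on $\{|v_k|>n\}$
\begin{align*}
|v_k|^{\theta}=|v_k|^{1-\theta}\,|v_k|^{2\theta-1}\leq n^{2\theta-1}|v_k|^{1-\theta},
\end{align*}
so the tail is bounded by $d_2 C n^{2\theta-1}\to 0$ as $n\to\infty$. Vitali yields $h(x,u_k,v_k)\to h(x,u,v)$ in $L^1(\Omega)$. The main delicate point I expect is the interplay between the Sobolev threshold $t^{*}\geq\theta+1$, which absorbs the $|v_k|^{\theta+1}$ factor in $g$, and the moment hypotheses in \ref{lem48:2}, \ref{lem48:3}, which are tailored precisely so that the high-energy sets $\{|v_k|>n\}$ contribute a tail vanishing in $n$; matching both scales correctly is what forces the particular form of the three assumptions.
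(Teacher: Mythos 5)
Your proof is correct and follows essentially the same approach as the paper's: extract subsequences converging a.e.\ and strongly in subcritical Lebesgue spaces via Rellich--Kondrachov, split the integral over a measurable set $E$ using the level sets $\{|u_k|>n\}$ (resp.\ $\{|v_k|>n\}$), absorb the tail using the structural hypothesis in each case (Chebyshev plus absolute continuity of $|f|\,dx$ in \textit{(i)}; the bounds $|v_k|^{\theta}\leq n^{-1}|v_k|^{\theta+1}$ and $|v_k|^{\theta}\leq n^{-(1-2\theta)}|v_k|^{1-\theta}$ on the high-energy sets in \textit{(ii)} and \textit{(iii)}), handle the small term by uniform integrability, and conclude by Vitali. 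The only cosmetic difference is that you invoke de la Vall\'ee-Poussin for the equi-integrability of $\{|v_k|^{\theta+1}\}$, while the paper derives it from strong $L^{\theta+1}$ convergence; both rest on the same strict inequality $\theta+1<t^{*}$.
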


%\begin{lemma}\label{lem48}
%Let $f\in L^m(\Omega)$ with $m\geqslant1$, $\{u_k\}$ and $\{v_k\}$ bounded in $W^{1,p}_0(\Omega)$, $p>1$ and $h$, $g$ be two Carathéodory functions satisfying \eqref{P1} and \eqref{P3}.
%Moreover,  suppose that
%\begin{enumerate}[label=\textit{(\roman*)}]
%\item \label{lem48:1}
%$\int_{\{|u_k|>n\}}|u_k|^r|v_k|^{\theta+1} + \int_{\{|u_k|>n\}}|u_k|^{r-1}|v_k|^{\theta+1} \leqslant C\int_{\{|u_k|>n\}}|f|;$
%\item \label{lem48:2} $\ |u_k|^r \ \mbox{is uniformly integrable.}$
%\end{enumerate}
%Then, there exist $u$ and $v$ in $W^{1,p}_0(\Omega)$ such that, up to subsequences relabeled the same
%\begin{align*}
%h(x,u_k,v_k) \to h(x,u,v) \ \ {\rm in} \ L^1(\Omega)
%\end{align*}
%and
%\begin{align*}
%g(x,u_k,v_k) \to g(x,u,v) \ \ {\rm in} \ L^1(\Omega).
%\end{align*}
%\end{lemma}

\begin{proof}
$(i)$
Since $\frac{N(\theta+1)}{N+\theta+1}\leqslant t< s$, then $\theta+1\leqslant t^*< s^*$, and also $u, v \in W^{1,t}(\Omega)$ such that,
up to subsequences
%by hypothesis $\{u_k\}$ and $\{v_k\}$ are bounded in $W^{1,p}_0(\Omega),$ there exist subsequences still indexed by $u_k$ and $v_k$ and functions $u$ and $v\in W^{1,p}_0(\Omega)$ such that
\begin{align}
\label{con.ast}
\begin{cases}
&u_k \rightharpoonup  u\ \ \mbox{weakly in} \ \ W^{1,s}_0(\Omega)\\
&v_k \rightharpoonup v\ \ \mbox{weakly in} \ \ W^{1,t}_0(\Omega),\\
&u_k \to u\ \ \mbox{ in} \ L^{t^*}(\Omega),\ \mbox{and a.e in}\
\Omega\\
&v_k \to v\ \ \mbox{ in} \ L^{t^*}(\Omega),\ \mbox{and a.e in}\ \Omega.
\end{cases}
\end{align}
Of course, we also have $g(x,u_k,v_k) \to g(x,u,v) \ \ \mbox{a.e in} \ \ \Omega$. Further, from \eqref{P1} and \ref{lem48:1}, given $E\subset\Omega$,  a measurable set, there follows that
\begin{align*}
\int_{E} |g(x,u_k,v_k)| &\leqslant c_2 \int_{E} |u_k|^{r-1}|v_k|^{\theta+1}\\
&= c_2 \int_{E\cap\{|u_k|\leqslant n\}} |u_k|^{r-1}|v_k|^{\theta+1} + c_2 \int_{E\cap\{|u_k|> n\}} |u_k|^{r-1}|v_k|^{\theta+1}\\
&\leqslant c_2 n^{r-1}\int_{E} |v_k|^{\theta+1} + c_2 \int_{\{|u_k|> n\}} |u_k|^{r-1}|v_k|^{\theta+1}\\
&\leqslant c_2 n^{r-1}\int_{E} |v_k|^{\theta+1} + C\int_{\{|u_k|>n\}} |f|.
\end{align*}
 \hspace{-.4cm}\textbf{Claim 1.} Given $\sigma>0$, there exist $n_0\in\mathbb{N}$ and $\delta>0$ with ${\rm \mbox{meas}}(E)<\delta$, such that
\begin{align*}
C\int_{\{|u_k|>n\}} |f| \leqslant \frac{\sigma}{2} \ \ \mbox{and} \ \ n^{r-1}\int_{E} |v_k|^{\theta+1} \leqslant \frac{\sigma}{2}.
\end{align*}
Note that, by H\"{o}lder's inequality
\begin{align*}
C\int_{\{|u_k|>n\}} |f| \leqslant C \|f\|_{L^m}\cdot{\rm \mbox{meas}}(\{|u_k|>n\})^{\frac{1}{m^\prime}}.
\end{align*}
Moreover, by considering $C_0$ such that
\begin{align*}
C_0 > \int_{\Omega} |u_k|^{t^*} \geqslant \int_{\{|u_k|>n\}} |u_k|^{t^*} > \int_{\{|u_k|>n\}} n^{t^*} = n^{t^*} {\rm \mbox{meas}}(\{|u_k|>n\})
\end{align*}
thence
\begin{align*}
{\rm \mbox{meas}}(\{|u_k|>n\}) < \frac{C_0}{n^{t^*}} \to 0, \ {\rm when}\ n\to\infty.
\end{align*}
That is, for all $\sigma_1>0$, there exists $n_0\in\mathbb{N}$ such that for $n>n_0$, we have
\begin{align*}
{\rm \mbox{meas}}(\{|u_k|>n\}) \leqslant \sigma_1 \ \forall\ k\in\mathbb{N}.
\end{align*}
Thus, by taking $\sigma_1=\frac{\sigma}{2C\left(\|f\|_{L^m}+1\right)}$, for $n>n_0$ fixed we have that
\begin{align*}
C\int_{\{|u_k|>n_0\}} |f| \leqslant \frac{\sigma}{2}.
\end{align*}
On the other hand, as $\theta+1<t^*$, there follows that
\begin{align*}
\|v_k-v\|^{\theta+1}_{L^{\theta+1}} \leqslant C \|v_k-v\|^{\theta+1}_{L^{t^*}} \to 0, \ \mbox{when} \ k\to\infty.
\end{align*}
In this way, by the Vitali Theorem there exists $\sigma_2$ such that ${\rm \mbox{meas}}(E)<\delta$ implies that
\begin{align*}
\int_{E}|v_k|^{\theta+1} < n^{r-1}\sigma_2.
\end{align*}
In this fashion, by taking $\sigma_2=\frac{\sigma}{2n^{r-1}}$, we get
\begin{align*}
\int_{E}|v_k|^{\theta+1} < \frac{\sigma}{2},  \ \mbox{ proving our claim}.
\end{align*}
At this point, let us stress that by {\bf Claim 1}, we have
\begin{align*}
\exists\ \delta>0; \ \ \ {\rm \mbox{meas}} (E)<\delta \ \ \mbox{implies} \ \ \int_{E}|g(x,u_k,v_k)|<\sigma, \ \ \forall \ \sigma>0,
\end{align*}
so that, by the Vitali Theorem, we get
$g(x,u_k,v_k) \to g(x,u,v) \ \ {\rm in} \ L^1(\Omega)$.

\ref{lem48:2} Now, remark that by \eqref{con.ast} we have
$ h(x,u_k,v_k) \to h(x,u,v) \ \ \mbox{ a.e in} \ \ \Omega$, it up to subsequences relabeled the same. Moreover, for $\sigma>0$, given $E\subset \Omega,$ by hypothesis \eqref{P3} combined with assumption \ref{lem48:2} p. \pageref{lem48:2} we have that
\begin{align*}
\int_{E}|h(x,u_k,v_k)| &\leqslant d_2 \int_{E} |u_k|^r |v_k|^\theta\\
&= d_2 \int_{E\cap \{|v_k|\leqslant n\}} |u_k|^r |v_k|^\theta + d_2 \int_{E\cap \{|v_k|> n\}} |u_k|^r |v_k|^{\theta+1-1}\\
&\leqslant d_2 n^\theta \int_{E} |u_k|^r + \frac{d_2}{n} \int_{\{|v_k|>n\}} |u_k|^r |v_k|^{\theta+1}\\
&\leqslant d_2 n^\theta \int_{E} |u_k|^r + \frac{d_2 C}{n},
\end{align*}
for all $n\in\mathbb{N}$. In particular, by taking $n_0$ such that $\frac{C}{n_0}<\frac{\sigma}{2d_2}$, we arrive at
\begin{align*}
\int_{E} |h(x,u_k,v_k)| \leqslant d_2 n_0^\theta \int_{E} |u_k|^r +\frac{\sigma}{2}.
\end{align*}
However, since $\{|u_k|^r\}$ is by hypothesis uniformly integrable, there exist $\delta>0$ for which, if ${\rm \mbox{meas}}(E)<\delta$, one has
\begin{align*}
\int_{E} |u_k|^r < \frac{\sigma}{2d_2 n_0^\theta}, \ \ \forall \ k,n \in\mathbb{N},
\end{align*}
and then
\begin{align*}
\int_{E}|h(x,u_k,v_k)| < \sigma,
\end{align*}
so that $h(x,u_k,v_k) \to h(x,u,v)$ in $L^1(\Omega)$, by the Vitali Theorem.

\ref{lem48:3} This proof is very similar to the last one, nevertheless, in the current case, the argument for the second convergence, is more delicate since now we lost some regularity of the estimates of the mixed terms. In any case, observe that for $\gamma=1-2\theta$, since $\int_{\{|v_k|>n\}}|u_k|^r|v_k|^{1-\theta}\leq C$ one has that
\begin{align*}
\int_{E}|h(x,u_k,v_k)| &\leqslant d_2 \int_{E} |u_k|^r |v_k|^\theta\\
&= d_2 \int_{E\cap \{|v_k|\leqslant n\}} |u_k|^r |v_k|^\theta + d_2 \int_{E\cap \{|v_k|> n\}} |u_k|^r |v_k|^{\theta+\gamma-\gamma}\\
&\leqslant d_2 n^\theta \int_{E} |u_k|^r + \frac{d_2}{n^\gamma} \int_{\{|v_k|>n\}} |u_k|^r |v_k|^{\theta+\gamma}\\
&\leqslant d_2 n^\theta \int_{E} |u_k|^r + \frac{d_2 C}{n^\gamma}, \ \ \  \mbox{for all} \ n \in \mathbb{N},
\end{align*}
 so that by repeating the same argument as in \ref{lem48:2}  we prove that $\{h(x,u_k,v_k)\}$ is uniformly integrable and the result follows once more by the Vitali Theorem.

\end{proof}

In the next lemma, despite its technical nature, we provide the mathematical background in order to validate the use of very specific test functions related to the proof of  Theorem \ref{principal}, see Lemma \ref{lem4.10}.
\begin{lemma}\label{teclema}
	Let $u,v\in W^{1,2}_0(\Omega)  \cap L^\infty(\Omega)$, $u\geqslant 0$ a.e. in $\Omega$,  $v >0 $ a.e. in $\Omega$, and let $\varphi_\varepsilon(t)=(t+\varepsilon)^\gamma-\varepsilon^\gamma$ be a Lipschitz function for all $t>0$, where $\varepsilon>0$, $0<\gamma<1$ and $H:\Omega\times\mathbb{R}\times\mathbb{R}\to\mathbb{R}$ be a Carathéodory function satisfying
 \[|H(x,t,s)|\leqslant C|t|^{\sigma_1}|s|^{\sigma_2}, \mbox{ for } \sigma_i>0, i=1,2.\]

 Then
\begin{enumerate}[label=\textit{(\alph*)}]
\item\label{tec:i} $\gamma\int_{\Omega_+}|\nabla u|^2 u^{\gamma-1} \leqslant \lim_{\varepsilon\to0^{+}} \int_\Omega \nabla u\cdot \nabla\varphi_\varepsilon(u)$,
%\item\label{tec:ii} $\gamma\int_{\Omega}|\nabla v|^2 v^{\gamma-1} \leqslant \lim_{\varepsilon\to0^{+}} \int_\Omega \nabla v\cdot \nabla\varphi_\varepsilon(v)$,

\item\label{tec:ii} $\int_\Omega H(x,u,v)u^\gamma = \lim_{\varepsilon\to0^{+}}\int_\Omega H(x,u,v)\varphi_\varepsilon(u)$,
\end{enumerate}
where %\varphi_\varepsilon=\varphi_\varepsilon(u)$ and
$\Omega_+ = \{x\in \Omega: u > 0\}$.
\end{lemma}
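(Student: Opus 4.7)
Both statements are convergence results that follow from standard integration theorems once the chain rule is in place and one exploits that $u,v\in L^\infty(\Omega)$. The key tools are the Marcus--Mizel chain rule for Lipschitz compositions with Sobolev functions, the Stampacchia identity $\nabla u=0$ a.e. on $\{u=0\}$, and the fact that $\varphi_\varepsilon(u)\to u^\gamma$ pointwise.

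For part \ref{tec:i}, I would first observe that for each fixed $\varepsilon>0$, $\varphi_\varepsilon$ is Lipschitz on $[0,\infty)$ (its derivative is bounded by $\gamma\varepsilon^{\gamma-1}$) and $\varphi_\varepsilon(0)=0$, hence $\varphi_\varepsilon(u)\in W^{1,2}_0(\Omega)$ with
\begin{equation*}
\nabla \varphi_\varepsilon(u)=\gamma (u+\varepsilon)^{\gamma-1}\nabla u\quad\text{a.e. in }\Omega.
\end{equation*}
Therefore
\begin{equation*}
\int_\Omega \nabla u\cdot\nabla\varphi_\varepsilon(u)=\gamma\int_\Omega (u+\varepsilon)^{\gamma-1}|\nabla u|^2.
\end{equation*}
Since $\nabla u=0$ a.e. on $\Omega\setminus\Omega_+$, the integrand vanishes outside $\Omega_+$. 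On $\Omega_+$, the map $\varepsilon\mapsto (u+\varepsilon)^{\gamma-1}$ is monotonically nondecreasing as $\varepsilon\searrow 0$ (since $\gamma-1<0$), converging pointwise to $u^{\gamma-1}$. The Monotone Convergence Theorem then yields
\begin{equation*}
\lim_{\varepsilon\to 0^+}\gamma\int_{\Omega_+}(u+\varepsilon)^{\gamma-1}|\nabla u|^2=\gamma\int_{\Omega_+} u^{\gamma-1}|\nabla u|^2,
\end{equation*}
which is actually the equality version of \ref{tec:i} (the statement only requires $\leqslant$, which is automatic).

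For part \ref{tec:ii}, I would verify the pointwise convergence $\varphi_\varepsilon(u(x))\to u(x)^\gamma$ a.e.: on $\Omega_+$ it is immediate by continuity, while on $\{u=0\}$ one has $\varphi_\varepsilon(0)=\varepsilon^\gamma-\varepsilon^\gamma=0=0^\gamma$. By subadditivity of $t\mapsto t^\gamma$ for $\gamma\in(0,1)$, $(u+\varepsilon)^\gamma\leqslant u^\gamma+\varepsilon^\gamma$, so $0\leqslant\varphi_\varepsilon(u)\leqslant u^\gamma\leqslant \|u\|_\infty^\gamma$. Combined with the uniform bound $|H(x,u,v)|\leqslant C\|u\|_\infty^{\sigma_1}\|v\|_\infty^{\sigma_2}$ on the bounded set $\Omega$, the product $|H(x,u,v)\varphi_\varepsilon(u)|$ is dominated by the constant $C\|u\|_\infty^{\sigma_1+\gamma}\|v\|_\infty^{\sigma_2}\in L^1(\Omega)$. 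The Dominated Convergence Theorem then delivers \ref{tec:ii}.

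The only mild obstacle is the chain rule invocation, which however is entirely standard given the Lipschitz character of $\varphi_\varepsilon$ and the fact that $u\in W^{1,2}_0(\Omega)$. Once this is in place, the argument reduces to pointwise identifications plus MCT in \ref{tec:i} and DCT in \ref{tec:ii}, with all the uniform bounds furnished for free by $u,v\in L^\infty(\Omega)$.
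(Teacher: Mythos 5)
Your proof is correct and follows essentially the same route as the paper: chain rule for Lipschitz compositions, the Stampacchia identity $\nabla u=0$ a.e.\ on $\{u=0\}$, and a convergence theorem in each part. The one small improvement is that in part~\ref{tec:i} you invoke monotone convergence (the map $\varepsilon\mapsto(u+\varepsilon)^{\gamma-1}$ increases as $\varepsilon\searrow 0$), which shows the limit exists and delivers equality, whereas the paper applies Fatou's lemma and obtains only the stated inequality with a $\liminf$; your part~\ref{tec:ii} is identical in substance to the paper's DCT argument, with a slightly sharper dominating function via subadditivity of $t\mapsto t^\gamma$.
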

\begin{proof}
\ref{tec:i} Remark that $u(x)>0$ a.e. in $\Omega_+$, so that by setting
\begin{align*}
\omega_u =
    \begin{cases}
        u^{\gamma-1} \ \ \mbox{a.e. in} \ \Omega_+ ;\\
        + \infty \ \ \mbox{a.e. in} \ \Omega \backslash \Omega_+,
    \end{cases}
\end{align*}
$\omega_u$ is measurable and well--defined. Moreover, since $|\nabla u|= 0$ a.e. in $\Omega\backslash \Omega_+$, by using the real extended line arithmetic rules, we have $|\nabla u|^2 \omega_u = 0 \ \mbox{a.e. in} \ \Omega\backslash \Omega_+$. Thus, \[\int_{\Omega} |\nabla u|^2\omega_u = \int_{\Omega_+} |\nabla u|^2 u^{\gamma}.\]

Now, observe that $0\leqslant (u+\varepsilon)^{\gamma-1}<u^{\gamma-1}$
and $(u+\varepsilon)^{\gamma-1} \to \omega_u \ \ \mbox{a.e. in} \ \ \Omega \ \ \mbox{when} \ \ \varepsilon\to0^+$, there follows from the Fatou Lemma that
\begin{align*}
\gamma \int_{\Omega_+} |\nabla u|^2 u^{\gamma-1} &\leqslant \liminf_{\varepsilon\to0^+} \gamma \int_\Omega |\nabla u|^2 (u+\varepsilon)^{\gamma-1}\\
&= \liminf_{\varepsilon\to0^+} \int_\Omega \nabla u\cdot \nabla\varphi_\varepsilon(u).
\end{align*}\\
%In addition, it is clear that  $\gamma\int_\Omega|\nabla v|^2 v^{\gamma-1} \leqslant \lim_{\varepsilon\to 0^{+}} \int_\Omega |\nabla v|^2(v+\varepsilon)^{\gamma-1}$, since $v>0$ a.e. in $\Omega$.\\
\ref{tec:ii} Note that
\begin{align*}
|H(x,u,v)[(u+\varepsilon)^\gamma-\varepsilon^\gamma]| &\leqslant |H(x,u,v)| [(u+\varepsilon)^\gamma + \varepsilon^\gamma]\\
&\leqslant c_2 |u|^{\sigma_1} |v|^{\sigma_2} [(u+\varepsilon)^\gamma + \varepsilon^\gamma].	
\end{align*}
	Suppose $0<\varepsilon\leqslant1,$ we have
	\begin{align*}
		|H(x,u,v)[(u+\varepsilon)^\gamma-\varepsilon^\gamma]| \leqslant C\|u\|^{\sigma_1}_{L^\infty}\|v\|^{\sigma_2}_{L^\infty} [(\|u\|_{L^\infty}+1)+1].
	\end{align*}
	Moreover,
	\begin{align*}
		H(x,u,v)[(u+\varepsilon)^\gamma -\varepsilon^\gamma] \to H(x,u,v)u^\gamma, \ \ \mbox{a.e. in}\ \ \Omega, \ \mbox{when} \ \ \varepsilon\to0^+ .
	\end{align*}
Thus, by the Lebesgue Dominated Convergence Theorem
	\begin{align*}
		\int_\Omega H(x,u,v)u^\gamma = \lim_{\varepsilon\to0^+}\int_\Omega H(x,u,v)\varphi_\varepsilon(u).
	\end{align*}
\end{proof}

\begin{remark}\label{1029}
It is clear that by arguing in an analogous manner to the proof of Lemma, since $v>0$ a.e. in $\Omega$ we also have the validity of
\begin{enumerate}[label=\textit{(\alph*)}]
\item\label{1603} $\gamma\int_{\Omega}|\nabla v|^2 v^{\gamma-1} \leqslant \lim_{\varepsilon\to0^{+}} \int_\Omega \nabla v\cdot \nabla\varphi_\varepsilon(v)$,
\item\label{1608} $\int_\Omega H(x,u,v)v^\gamma = \lim_{\varepsilon\to0^{+}}\int_\Omega H(x,u,v)\varphi_\varepsilon(v)$.
\end{enumerate}
\end{remark}

Now, by using hypotheses \eqref{P1}-\eqref{P5}, and a standard argument based on the Schauder Fixed Point Theorem, we obtain
 existence of solutions to preliminary version of \eqref{P}. For the sake of simplicity, its proof will be omitted, we only point out that in order to construct a well-defined operator whose fixed points are weak solutions, we use \eqref{P2}.
\begin{proposition}\label{prop1}
Let $\Phi\in L^\infty(\Omega)$ and $\tau>0$. There exists a weak solution $(u,v) \in  W^{1,2}_0(\Omega)\times W^{1,2}_0(\Omega)$ to the system
\begin{align*}\tag{$P_{A}$}\label{P_A}
\begin{cases}
\ -\mbox{div}(M(x)\nabla u)+g_\tau(x,u,v)=\Phi\\ \ -\mbox{div}(M(x)\nabla v)=h_\tau(x,u,v)+\frac{1}{\tau},
\\
			\ \ \ \ \ u=v=0 \ \ \mbox{on} \ \ \partial \Omega.
\end{cases}
\end{align*}
Furthermore, $u, v \in L^\infty(\Omega)$.
\end{proposition}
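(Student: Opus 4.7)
The strategy is to apply the Schauder Fixed Point Theorem in $L^{2}(\Omega)\times L^{2}(\Omega)$ to an operator whose fixed points are weak solutions of \eqref{P_A}. The construction proceeds by a two-step decoupling. Given $(\bar u,\bar v)\in L^{2}(\Omega)\times L^{2}(\Omega)$, I would first solve
\[
-\mbox{div}(M(x)\nabla v)=h_\tau(x,\bar u,\bar v)+\tfrac{1}{\tau},
\]
which has a unique $v\in W^{1,2}_0(\Omega)$ by Lax-Milgram, since the right-hand side lies in $L^{\infty}(\Omega)$ (as $h_\tau$ is bounded by $\tau$) and the bilinear form is coercive by \eqref{P5}. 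Moreover, since $h_\tau\geqslant 0$ by \eqref{P4} and $1/\tau>0$, testing with $v^{-}=\max(-v,0)\in W^{1,2}_0(\Omega)$ yields $v\geqslant 0$ a.e.\ in $\Omega$ via the standard maximum-principle argument. With this non-negative $v$ in hand, I would then solve
\[
-\mbox{div}(M(x)\nabla u)+g_\tau(x,u,v)=\Phi.
\]
This is where \eqref{P2} is essential: the truncation $T_\tau$ preserves monotonicity, so $u\mapsto g_\tau(x,u,v)$ is monotone non-decreasing when evaluated at $v\geqslant 0$; combined with coercivity from \eqref{P5} and the bound $|g_\tau|\leqslant\tau$, the Browder-Minty theorem supplies a unique $u\in W^{1,2}_0(\Omega)$. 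Defining $\mathcal{T}(\bar u,\bar v)=(u,v)$ yields an operator $\mathcal{T}:L^{2}\times L^{2}\to L^{2}\times L^{2}$ whose fixed points are exactly the weak solutions of \eqref{P_A}.

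To invoke Schauder I would verify invariance, compactness, and continuity. Testing the $u$-equation with $u$ and the $v$-equation with $v$, and using \eqref{P22} evaluated at $v\geqslant 0$ to drop the $g_\tau u$ term, produces uniform energy bounds
\[
\|\nabla u\|_{L^{2}}+\|\nabla v\|_{L^{2}}\leqslant C(\tau,\alpha,\Omega,\|\Phi\|_{L^{\infty}})
\]
independent of the input. Thus $\mathcal{T}(L^{2}\times L^{2})$ is contained in a fixed ball $B\subset W^{1,2}_0\times W^{1,2}_0$; taking $K$ the closure of $B$ in $L^{2}\times L^{2}$ yields a closed, bounded, convex set stable under $\mathcal{T}$. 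Compactness of $\mathcal{T}:K\to K$ comes from the Rellich-Kondrachov embedding $W^{1,2}_0\hookrightarrow L^{2}$. For continuity, if $(\bar u_n,\bar v_n)\to(\bar u,\bar v)$ in $L^{2}$, I would extract a.e.\ convergent subsequences, apply dominated convergence (using $|g_\tau|,|h_\tau|\leqslant\tau$) to pass to the limit in the weak formulations, and use uniqueness of each decoupled problem to conclude convergence of the full sequence $\mathcal{T}(\bar u_n,\bar v_n)\to\mathcal{T}(\bar u,\bar v)$.

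Schauder then produces a fixed point $(u,v)\in W^{1,2}_0(\Omega)\times W^{1,2}_0(\Omega)$ solving \eqref{P_A}. For the $L^{\infty}$ claim, I would rewrite each equation with the truncated nonlinearity on the right: the right-hand sides $\Phi-g_\tau(x,u,v)$ and $h_\tau(x,u,v)+1/\tau$ both belong to $L^{\infty}(\Omega)$, so Stampacchia's classical $L^{\infty}$ regularity (as cited for \eqref{d2.1}) gives $u,v\in L^{\infty}(\Omega)$. The main obstacle in this plan is arranging the decoupling order correctly: the Browder-Minty step for $u$ requires $v\geqslant 0$ in order to activate \eqref{P2}, so one must solve the $v$-equation first and exploit its strictly positive source to guarantee the sign, and only then handle the $u$-equation with a well-defined monotone operator.
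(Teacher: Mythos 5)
Your proposal is correct and uses the same overall strategy as the paper: decouple the two equations into sub-problems, build a compact solution operator on an $L^{2}$-type space, and apply Schauder via a uniform energy bound plus Rellich--Kondrachov. The paper, however, sets up the fixed point on a single copy of $L^{2}(\Omega)$ through the composition $G=T\circ S$, where $S(\zeta)$ solves the $u$-equation with $\zeta$ frozen in place of $v$, and $T(w)$ solves the (quasilinear) $v$-equation with $w$ frozen in place of $u$ but the unknown still appearing inside $h_\tau$; you instead work on the product $L^{2}\times L^{2}$, freeze both inputs in $h_\tau$ so the $v$-step becomes a purely linear Lax--Milgram problem, and then solve the $u$-step with the newly produced $v$. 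Your ordering (solve $v$ first, extract $v\geqslant 0$ by testing with $v^{-}$, then use \eqref{P2}/\eqref{P22} to make $u\mapsto g_\tau(x,u,v)$ monotone) is a genuine improvement in clarity: the paper invokes uniqueness and Leray--Lions for the $u$-equation with an arbitrary $\zeta\in L^{2}(\Omega)$ frozen, even though \eqref{P2} only guarantees monotonicity when the second slot is non-negative, a point your arrangement handles explicitly. You also supply the $L^{\infty}$ conclusion via Stampacchia applied to the bounded right-hand sides $\Phi-g_\tau$ and $h_\tau+1/\tau$, which the paper states in Proposition \ref{prop1} but does not carry out in its proof. Both routes reach the same statement; yours trades the one-variable fixed point and a quasilinear sub-problem for a two-variable fixed point with two fully decoupled, easier sub-problems.
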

\begin{proof}
Fix $\zeta\in L^{2}(\Omega).$ By the Leray--Lion Theorem, see \cite{Boccardo}, Theorem 5.6, p. 40, there exists $u\in W^{1,2}_0(\Omega)$ such that
\begin{align}\label{I}
\int_{\Omega}M(x)\nabla u\cdot\nabla\varphi  + \int_{\Omega} g_{\tau}(x,u,\zeta)\varphi = \int_{\Omega} \Phi\varphi \ \ \ \ \forall\ \varphi \in W^{1,2}_{0}(\Omega).
\end{align}
It is clear that $u$ is unique, so that we define $S:L^{2}(\Omega)\to L^{2}(\Omega)$, where $S(\zeta)=u$ is given as above.
Further, for $u=S(\zeta)$ it is straightforward to that
\begin{align}\label{D2}
\|u\|_{W^{1,2}_0}\leqslant R_1 ,
\end{align}
for $R_1=C\left[\|\Phi\|_{L^\infty}+\tau\right]{\rm meas}(\Omega)^{1/2}$ .\\

In an analogous manner, by the same result, given $w\in L^{2}(\Omega)$ fixed, there exists $\eta \in W^{1,2}_0(\Omega)$ satisfying
\begin{align*}
\int_\Omega M(x) \nabla \eta\cdot\nabla\psi =\int_\Omega \Big(h_{\tau}(x,w,\eta\big)+\frac{1}{\tau}\Big)\psi \ \ \forall \ \psi\in W^{1,2}_0(\Omega),
\end{align*}
and once again, since $\eta$ is unique
we define $T:L^{2}(\Omega)\to L^{2}(\Omega)$, where $T(w)=\eta$.
Let us stress that for $\eta=T(w)$, we have
\begin{align}\label{D3}
\|\eta\|_{W^{1,2}_0}\leqslant R_2,
\end{align}\
where $R_2= C\Big(\frac{\tau^2+1}{\alpha\tau}\Big) \mbox{meas}(\Omega)^{1/2}$.

Now, let us define $G:L^{2}(\Omega)\to L^{2}(\Omega)$, where $G=T \circ S$. First observe that $G$ is obviously continuous.

We claim that $G$ satisfies the assumptions of the Schauder Fixed Point Theorem. Indeed, consider
\[B=B(0,R)=\{u\in L^2(\Omega); \ \|u\|_{L^2}\leqslant\ R\}\]
where $R = C_1\max \{R_1,R_2\}$ and $C_1>0$ is the Poincar\'e constant. Note that, by \eqref{D2} and \eqref{D3}, we have $G((B))\subset B$.

Finally, remark that $G(B)\subset W^{1,2}_0(\Omega)$ so that by the Rellich--Kondrachov Theorem, $G$ is also compact.
Therefore, by the Schauder Fixed Point Theorem, there exists $v\in G(B)\subset W^{1,2}_0(\Omega)$ such that
\begin{align*}
v=G(v)=T(S(v))=T(u).
\end{align*}
\end{proof}
\section{Approximate Problem}
This section comprehends the heart of the contributions on the present paper.  Indeed, we explore the choice of tailored test functions for a favorable  approximate version of \eqref{P} in order to obtain certain key estimates, see Lemmas \ref{teo47} and \ref{lem4.10}.

Nevertheless, we start by obtaining estimates for a first version of approximate problem, i.e.,  that the solutions given  by Proposition \ref{prop1} are bounded in $W^{1,2}_0(\Omega) \cap L^{\infty}(\Omega)$. We stress that, surprisingly, in order  to obtain $L^{\infty}$ estimates for $v$, we have to impose that $\theta<\frac{4}{N-2}$.% that is, for $N\leqslant 6$ we have to $\theta$ is any in between $0$ and $1$. Otherwise $\theta$ depends on the dimension, in other words, the larger the dimension, smaller is the value of $\theta$.
%\begin{align*}
%&\int_\Omega \nabla u\nabla \varphi +\int_\Omega h_k(x,u,v)\varphi = \int_\Omega \Phi \varphi \ \  \forall \ \varphi \in W^{1,2}_0(\Omega),\\
%&\int_\Omega \nabla v\nabla \psi = \int_\Omega g_k(x,u,v)\psi \ \  \forall \ \psi \in W^{1,2}_0(\Omega)
%\end{align*}

\begin{lemma} \label{lemma31} Let $\Phi \in L^{m}(\Omega)$ with $m\geqslant 1$  and let $(u,v)$ be the solution of system \eqref{P_A} given by Proposition \ref{prop1}. Then there exists a constant $C>0$ which does not depend on $\tau$, such that
\begin{itemize}
    \item [(i)] if $\Phi \in L^{m}(\Omega)$ for  $m\geqslant (2^*)'$ then
    \begin{align*}
    \|u\|_{W^{1,2}_0} + \|v\|_{W^{1,2}_0}\leqslant C\|\Phi \|_{L^m};
\end{align*}
 \item [(ii)] if $\Phi \in L^{m}(\Omega)$ for  $m>\frac{N}{2}$ and $0<\theta<\min \big (\frac{4}{N-2},1\big)$  then
 \begin{align*}
\|u\|_{L^{\infty}}+\|v\|_{L^{\infty}}\leqslant C\|\Phi \|_{L^m}.
\end{align*}
\end{itemize}
\end{lemma}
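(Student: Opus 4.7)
The plan is to derive both estimates by testing the equations of \eqref{P_A} with carefully chosen functions and exploiting the sign of $g_\tau u$ together with the polynomial two-sided control on $g$ and $h$ provided by \eqref{P1} and \eqref{P3}. As a preliminary observation, since $h_\tau \geqslant 0$ by \eqref{P4} and $1/\tau > 0$, the right-hand side of the second equation of \eqref{P_A} is strictly positive; the strong maximum principle applied to $v \in W^{1,2}_0(\Omega) \cap L^\infty(\Omega)$ then yields $v > 0$ a.e.\ in $\Omega$. In particular, \eqref{P22} gives $g(x,u,v) u \geqslant 0$, and since $T_\tau$ preserves the sign of its argument, $g_\tau(x,u,v) u \geqslant 0$ pointwise a.e.

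For part (i), the estimate on $u$ is standard: testing the first equation of \eqref{P_A} with $u$, dropping the non-negative contribution $\int_\Omega g_\tau u$, and invoking Sobolev embedding together with $m \geqslant (2^*)'$ gives
\begin{equation*}
\alpha \|\nabla u\|_{L^2}^2 \leqslant \int_\Omega \Phi u \leqslant \|\Phi\|_{L^{(2^*)'}} \|u\|_{L^{2^*}} \leqslant C \|\Phi\|_{L^m} \|\nabla u\|_{L^2},
\end{equation*}
hence $\|\nabla u\|_{L^2} \leqslant C \|\Phi\|_{L^m}$, uniformly in $\tau$. For $v$, I would test the second equation with $v$ to obtain $\alpha \|\nabla v\|_{L^2}^2 \leqslant \int_\Omega h_\tau v + (1/\tau)\int_\Omega v$, and control the coupling through
\begin{equation*}
\int_\Omega h_\tau v \leqslant \int_\Omega h v \leqslant d_2 \int_\Omega u^r v^{\theta+1} \leqslant \frac{d_2}{c_1} \int_\Omega g(x,u,v) u,
\end{equation*}
using $|h_\tau| \leqslant |h|$, then \eqref{P3} with $v > 0$, and finally \eqref{P1}. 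Since the solution belongs to $L^\infty$ by Proposition \ref{prop1}, the truncation is inactive on the solution for $\tau$ large enough, so $\int_\Omega g u = \int_\Omega g_\tau u \leqslant \int_\Omega \Phi u \leqslant C \|\Phi\|_{L^m}^2$; combined with Poincar\'e and Young to absorb the $(1/\tau)\int v$ term, this produces the uniform bound $\|\nabla v\|_{L^2} \leqslant C \|\Phi\|_{L^m}$.

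For part (ii), I would proceed in two steps. First, I would establish $\|u\|_{L^\infty} \leqslant C \|\Phi\|_{L^m}$ via a Stampacchia iteration on the first equation: testing with $G_k(u)$ and using that $g_\tau(x,u,v) G_k(u)$ shares the sign of $g_\tau(x,u,v) u \geqslant 0$, one obtains $\alpha \|\nabla G_k(u)\|_{L^2}^2 \leqslant \int_\Omega \Phi G_k(u)$, and the assumption $m > N/2$ then gives the claim through the classical Stampacchia $L^\infty$ lemma, uniformly in $\tau$. With this bound on $u$, the right-hand side of the second equation satisfies
\begin{equation*}
h_\tau(x,u,v) + \frac{1}{\tau} \leqslant d_2 \|u\|_{L^\infty}^r v^\theta + 1 \leqslant C(\|\Phi\|_{L^m}) (v^\theta + 1),
\end{equation*}
and by part (i), $v \in L^{2^*}(\Omega)$ uniformly in $\tau$, so this right-hand side lies in $L^{2^*/\theta}(\Omega)$. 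The assumption $\theta < 4/(N-2)$ is precisely equivalent to $2^*/\theta > N/2$, so Stampacchia's $L^\infty$ regularity theorem applied to the second equation delivers $\|v\|_{L^\infty} \leqslant C \|\Phi\|_{L^m}$, uniformly in $\tau$.

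The main technical obstacle lies in part (i): for the truncated $g_\tau$, the pointwise lower bound $g_\tau u \geqslant c_1 u^r v^{\theta+1}$ fails on the set $\{|g(x,u,v)| > \tau\}$, where only $g_\tau u = \tau |u|$ is available, so the chain $\int h_\tau v \leqslant (d_2/c_1) \int g_\tau u \leqslant (d_2/c_1) \int \Phi u$ is not automatic. I would overcome this either by using the $L^\infty$ regularity from Proposition \ref{prop1} to argue that, for $\tau$ sufficiently large relative to $\|u\|_{L^\infty}$ and $\|v\|_{L^\infty}$, the truncation is inactive on the solution and $g_\tau = g$ pointwise, or by a case analysis on $\{|g| \leqslant \tau\}$ and $\{|g| > \tau\}$ in which the contribution from the exceptional set is absorbed using that its measure vanishes as $\tau \to \infty$.
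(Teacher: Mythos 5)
Your treatment of part (i) and of the $L^\infty$ bound on $u$ in part (ii) essentially reproduces the paper's arguments: test with $u$ and $G_k(u)$, discard the sign-definite nonlinear term, apply Sobolev--H\"older and the Stampacchia iteration lemma, and for $\|\nabla v\|_{L^2}$ control $\int_\Omega h_\tau v$ via \eqref{P3}, \eqref{P1} and the energy bound $\int_\Omega g u \leqslant C\|\Phi\|_{L^m}^2$. The truncation subtlety you flag at the end — that $g_\tau u \geqslant c_1 u^r v^{\theta+1}$ fails on $\{|g|>\tau\}$ — is genuine, but the paper handles it no more rigorously than you do (it invokes Fatou with an informal ``$\tau\to\infty$'' applied to the solution of a fixed $\tau$-truncated problem); both treatments are vindicated in Proposition~\ref{cor1}, where $\tau$ is taken large against the $L^\infty$ bounds from part (ii), rendering the truncation inactive. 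Where you genuinely diverge from the paper is in the $L^\infty$ bound on $v$ in part (ii). The paper runs a second, hands-on Stampacchia iteration on $G_k(v)$ in the second equation, estimating $\int_{A^v_k}|v|^\theta|G_k(v)|$ by H\"older and Sobolev with $\|u\|_{L^\infty}^r$ and $\|v\|_{W^{1,2}_0}^\theta$ folded into the constant, the restriction $\theta<\frac{4}{N-2}$ being exactly what pushes the resulting exponent above one. You instead bootstrap: with $\|u\|_{L^\infty}$ and $v\in L^{2^*}(\Omega)$ already in hand, the right-hand side $h_\tau(x,u,v)+\frac{1}{\tau}$ is controlled in $L^{2^*/\theta}(\Omega)$, and $\theta<\frac{4}{N-2}$ is precisely $\frac{2^*}{\theta}>\frac{N}{2}$, so the standard Stampacchia $L^p\to L^\infty$ regularity theorem for the linear equation closes the argument. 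Your route is shorter and more modular, outsourcing the iteration to a quoted theorem; the paper's is more elementary and self-contained. Both read the same arithmetic out of the threshold on $\theta$, and both produce a uniform-in-$\tau$ bound whose exact dependence on $\|\Phi\|_{L^m}$ is really of the shape $C\big(1+\|\Phi\|_{L^m}^{r+\theta}\big)$ rather than the stated $C\|\Phi\|_{L^m}$ — a harmless imprecision you inherit from the source.
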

\begin{proof}

By taking $\varphi=u$  in the weak formulation of the first equation of \eqref{P_A}, and by combining the ellipticity of $M$ with H\"{o}lder's and Sobolev's inequalities we end up with
\begin{align}
\nonumber
\|u\|_{L^{2^*}} &\leqslant C \|\Phi\|_{L^m},\\
\label{e4.3}
\|\nabla u\| _{L^2}&\leqslant C \|\Phi\|_{L^m} \mbox{ and }\\
\nonumber
\displaystyle \int_\Omega g_\tau(x,u,v)u
&\leqslant C\|\Phi\|^2_{L^m}.
\end{align}
%and by combining the above estimate with \eqref{grd4.2}, we get
%\begin{align} \label{et4.5}
%
%\end{align}
%On the other hand, by \eqref{e.4.2} and \eqref{e4.3} there follows
%\begin{align}\label{linf5}
%
%\end{align}
Further, we claim that
\begin{align}\label{claim1}
\int_{\Omega}|u|^r|v|^{\theta+1}\leqslant C\|\Phi\|_{L^m}.	
\end{align}
In fact, by \eqref{P22} and the definition of $g_\tau$, see \eqref{truncationshg}, it is clear that
$\int_\Omega g_\tau(x,u,v)u
\geqslant \int_{\{|g|\leqslant \tau\}} g(x,u,v)u,
$
and thus, from \eqref{e4.3}, we get
\begin{align*}%\label{ast}
\int_{\{|g|\leqslant \tau\}} g(x,u,v)u \leqslant C\|\Phi\|^2_{L^m}.
\end{align*}
%Now, since $h_k$ is  Carathéodory, % function a.e. in $\Omega$, so
%\begin{align*}
%{\rm \mbox{meas}}(\widetilde{\Omega}) = {\rm \mbox{meas}}(\{x\in \Omega;   h \ \mbox{is not finite}\})=0,
%\end{align*}
%Thus for all $\varepsilon>0$ and $x_\circ\in \Omega$\textbackslash$\widetilde{\Omega}$ there exist $k_\circ=k_\circ(x_\circ)$ tal que
%\begin{align*}
%k>k_\circ \ \ \implies \ \ |\mathcal{X}_{\{|h|\leqslant k\}}(x_\circ)-\mathcal{X}_\Omega(x_\circ)|= 0 < \epsilon,
%\end{align*}
%i.e.,
%\begin{align*}
%\mathcal{X}_{\{|h|\leqslant k\}}(x_\circ) \to \mathcal{X}_\Omega(x_\circ) \ \ \forall x_\circ\in \Omega\mbox{\textbackslash}\widetilde{\Omega}.
%\end{align*}
%Since
%\begin{align*}
%\int_{\Omega}h(x,u,v)u\mathcal{X}_{\{|h|\leqslant k\}} = \int_{\{|h|\leqslant k\}} h(x,u,v)u \geqslant 0
%\end{align*}
%and
%\begin{align*}
%\int_{\{|h|\leqslant k\}} h(x,u,v)u < \infty.
%\end{align*}
In this way, if we recall \eqref{P1}, by taking $\tau\to+\infty$ in the latter inequality, as a direct application of Fatou's lemma we arrive at
\begin{align*}%\label{ast2}
 \int_\Omega	|u|^r|v|^{\theta+1}\leqslant \int_\Omega g(x,u,v)u \leqslant C \|\Phi\|^2_{L^m},
\end{align*}
proving our claim.

Further, by taking $\psi = v$ in the weak formulation of the second equation of \eqref{P_A},
%\begin{align*}
%\int_{\Omega} M(x)\nabla v \cdot\nabla v = %\int_{\Omega}g(x,u,v)v,
%\end{align*}
by combining \eqref{P3} and the ellipticity of $M$, we have
\begin{align*}
    \alpha \int _{\Omega} |\nabla v|^2 \leqslant \int_\Omega \Big(h_{\tau}(x,u,v)+\frac{1}{\tau}\Big)v &\leqslant \int_\Omega |h_{\tau}(x,u,v)| |v| + \frac{1}{\tau}\int_\Omega |v| \\ &\leqslant d_1 \int_{\Omega} |u|^r|v|^{\theta+1}+ \frac{C}{\tau}||v||_{W^{1,2}_0}.
\end{align*}
Thus, taking $\tau \to +\infty$ and by \eqref{claim1} there follows that $
     \int _{\Omega} |\nabla v|^2 \leqslant C \|\Phi\|^2_{L^m}$
and hence, by combining \eqref{e4.3} and the last inequality, we get
\begin{align*}
    \int_{\Omega} |\nabla u|^2 + \int_{\Omega}|\nabla v|^2 \leqslant C \|\Phi\|^2_{L^m},
\end{align*}
what proves $(i)$.

Now we proceed to the $L^{\infty}(\Omega)$ estimates. For this,  let us recall the defintion of one the standard Stampacchia's truncation, $G_k(s)=(|s|-k)^+\mbox{sign(s)}$. Then, we take $\varphi=G_k(u)$ in the weak formulation of the first equation of \eqref{P_A}, obtaining
\begin{equation}\label{4.8est}
 \int_ \Omega M(x) \nabla u \cdot \nabla u G_{k}'(u) +\int_ \Omega g_\tau(x,u,v)G_k(u) = \int_ \Omega \Phi G_k(u).	
\end{equation}
In addition, notice that clearly, there holds
\begin{align*}%\label{linf2}
\nonumber\int_\Omega g_\tau(x,u,v)G_k(u) &= \nonumber\int_{\{|u|>k\}} g_\tau(x,u,v)G_k(u) \\
%\nonumber& = \int_{\{u>k\}} h_k(x,u,v)G_k(u) + %\int_{\{u <-k\}} h_k(x,u,v)G_k(u)\\
& = \int_{\{u>k\}} g_\tau(x,u,v)(u-k) + \int_{\{u <-k\}} g_\tau(x,u,v)(u+k).
\end{align*}
Moreover, as a straightforward consequence of \eqref{truncationshg} and the definition of $G_k(.)$, we  have
\begin{equation}
\nonumber
%\label{linf3}
\int_{\Omega} g_\tau(x,u,v)G_k(u)\geqslant0.
\end{equation}
Thus, from the latter inequality, by using the ellipticity of $M$ and Hölder's inequality on the right-hand side of \eqref{4.8est}, we get
\begin{align*}
\alpha\int_\Omega |\nabla G_k(u)|^2 \leqslant \int_\Omega \Phi G_k(u)\leqslant \Big(\int_{A^u_K}|\Phi|^\frac{2N}{N+2}\Big)^{\frac{N+2}{2N}} \Big( \int_{\Omega}|G_k(u)|^{2^*}\Big)^\frac{1}{2^*},
\end{align*}
where $A^u_k = \{|u|>k\}$.

Additionally, recall that by Sobolev's and Hölder's inequalities there follows
\begin{align*}
    \Big( \int_{\Omega} |G_k(u)|^{2^*}\Big)^{\frac{2}{2^*}} \leqslant \Big(\int_{\Omega} |G_k(u)|^{2^*}\Big)^\frac{1}{2^*}\|\Phi\|_{L^m} \mbox{meas}(A^u_k)^{[1-\frac{2N}{(N+2)m}]\frac{N+2}{2N}},
\end{align*}
so that by the latter inequalities we arrive at
\begin{align}\label{G4.12}
\Big( \int_{\Omega} |G_k(u)|^{2^*}\Big)^{\frac{1}{2^*}}\leqslant C |\Phi\|_{L^m} \mbox{meas}(A^u_k)^{[1-\frac{2N}{(N+2)m}]\frac{N+2}{2N}}.
\end{align}
Moreover, by Hölder's inequality and \eqref{G4.12}, we have
\begin{align*}
    \int_{\Omega}|G_k(u)| = \int_ {A^u_k}|G_k(u)| \leqslant \mbox{meas}(A^u_k)^{\frac{N+2}{2N}}\Big(\int_{\Omega} |G_k(u)|^{2^*}\Big)^{\frac{1}{2^*}}
    \leqslant C \|\Phi\|_{L^m} \mbox{meas} (A^u_k)^{\alpha}.
\end{align*}
Where $\alpha = 1+ \frac{2}{N}-\frac{1}{m}> 1$, since $m>\frac{N}{2}$. Hence, by Lemma $6.2$ in \cite{Boccardo} p. 49,
\begin{align*}
u\in L^\infty(\Omega) \ \mbox{and} \ \|u\|_{L^\infty}\leqslant C\|\Phi\|_{L^m},
\end{align*}
where we stress that the restriction of $\theta$ was not used.

Finally,  we handle the $L^\infty(\Omega)$ estimates on $v$. Indeed, if we take $\psi=G_k(v)$ in the second equation of \eqref{P_A}, by combining \eqref{P3}, the ellipticity of M and the \begin{align*}
\alpha\int_\Omega|\nabla G_k(v)|^2 &\leqslant \int_{\Omega} \Big( h_{\tau}(x,u,v) + \frac{1}{\tau}\Big)G_k(v) \leqslant \int_{A^v_k}|u|^r|v|^{\theta} |G_k(v)|+ \frac{1}{\tau}\int_\Omega |G_k(v)| \\
&\leqslant d_2 \|u\|^r_{L^{\infty}} \int_{A^v_k}|v|^{\theta} |G_k(v)| + \frac{2C}{\tau}||v||_{W^{1,2}_0} \mbox{meas}(\Omega), \ \mbox{where} \ A^v_k = {\{|v|>k \}}.
\end{align*}
Taking $\tau \to + \infty$ we have
\[\alpha\int_\Omega|\nabla G_k(v)|^2 \leqslant d_2 \|u\|^r_{L^{\infty}} \int_{A^v_k}|v|^{\theta} |G_k(v)|. \]
Further, by a straightforward combination of Sobolev's and Hölder's inequalities we get
\begin{align*}
\alpha S^2\left(\int_\Omega|G_k(v)|^{2^*}\right)^\frac{2}{2^*} \leqslant d_2 \|u\|^r_{L^{\infty}}\Big(\int_{A^v_k}|v|^{(2^*)'\theta}\Big)^{\frac{1}{(2^*)'}}\left(\int_\Omega|G_k(v)|^{2^*}\right)^{\frac{1}{2^*}},
\end{align*}
so that
\begin{align*}
S^2\left(\int_\Omega|G_k(v)|^{2^*}\right)^\frac{1}{2^*} \leqslant \Big(\int_{A^v_k}|v|^{(2^*)'\theta}\Big)^{\frac{1}{(2^*)'}},
\end{align*}
and then, by applying once again the Hölder inequality on the right-hand side, with the exponents $\frac{2^*}{(2^*)'\theta}$ and $\frac{2^*}{2^* - \theta(2^*)'}$, we arrive at
\begin{align*}%\label{est17}
\nonumber\left(\int_\Omega|G_k(v)|^{2^*}\right)^\frac{1}{2^*}
&\leqslant  C \mbox{meas}(A^v_k)^{\Big[\frac{2^* -\theta (2^*)'}{2^*}\Big]\cdot\frac{1}{(2^*)'}}\Big(\int_{\Omega} |v|^{2^*} \Big)^{\frac{\theta}{2^*}} \\
&\leqslant  C\|v\|^{\theta}_{W^{1,2}_0} \mbox{meas}(A^v_k)^{\Big[\frac{2^* -\theta (2^*)'}{2^*}\Big]\cdot\frac{1}{(2^*)'}}.
\end{align*}
Nonetheless, recall that
\begin{align*}
\int_\Omega |G_k(v)| = \int_{A^v_k} |G_k(v)| \leqslant{\rm \mbox{meas}}(A^v_k)^{\frac{1}{(2^*)'}} \left(\int_\Omega|G_k(v)|^{2^*}\right)^\frac{1}{2^*}.
\end{align*}
Thence,  the combination between the latter inequalities implies
\begin{align*}
\int_\Omega |G_k(v)| = \int_{A^v_k} |G_k(v)| \leqslant{\rm \mbox{meas}}(A^v_k)^{\alpha},
\end{align*}
where $\alpha=\frac{1}{(2^*)'}\Big[1+\frac{2^* - \theta(2^*)'}{2^*}\Big] > 1$, since $\frac{4}{N-2}> \theta$.
Therefore, once again by invoking Lemma $6.2$ in \cite{Boccardo}, we have $v\in L^\infty(\Omega)$
and
\begin{align*}
\|v\|_{L^\infty}\leqslant C\|\Phi\|_{L^m}.
\end{align*}
\end{proof}

With these tools at hand, we are able to prove existence of suitable solutions for a more favorable approximate version of our problem which will be explored in our investigation of \eqref{P}.

\begin{proposition}\label{cor1}
Let $\{f_k\}$ be a sequence of $L^\infty(\Omega)$ functions strongly convergent to $f$ in $L^m(\Omega),$ $m\geqslant1,$ for which
$
|f_k|\leqslant |f|
\mbox{ a.e. in } \Omega
$. Then, there exists  $(u_k,v_k) \in W^{1,2}_0(\Omega) \cap L^\infty(\Omega) \times W^{1,2}_0(\Omega)\cap L^\infty(\Omega)$, solution  to
\begin{align*}\tag{AP}\label{aprox}
\begin{cases}
\ -\mbox{div}(M(x)\nabla u_k)+g(x,u_k,v_k)= f_k \\ \ -\mbox{div}(M(x)\nabla v_k)=h(x,u_k,v_k)+\frac{1}{\tau _k},
\\
			\ \ \ \ \ u_k=v_k=0 \ \ \mbox{on} \ \ \partial \Omega,
\end{cases}
\end{align*}
where $\tau_k>0$ and $\tau_k \to + \infty$ if $k\to +\infty$. Moreover, if $f\ge0$ a.e. in $\Omega$ then $u_k\geqslant 0$  a.e. in $\Omega$ and $v_k > 0$ a.e. in $\Omega$.
\end{proposition}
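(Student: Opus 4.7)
The plan is to apply Proposition \ref{prop1} for each $k$ with $\Phi = f_k$, and then to choose the truncation parameter $\tau_k$ large enough so that the truncations \eqref{truncationshg} become inactive on the resulting solution pair. The essential ingredient is the $\tau$-independence of the $L^\infty$ bound supplied by Lemma \ref{lemma31}(ii).

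More precisely, fix $k$. Since $f_k \in L^\infty(\Omega) \subset L^m(\Omega)$ for every $m \geqslant 1$, Proposition \ref{prop1} (applied with $\Phi = f_k$ and an arbitrary $\tau > 0$) yields a solution $(u_\tau, v_\tau) \in (W^{1,2}_0(\Omega) \cap L^\infty(\Omega))^2$ of the truncated system \eqref{P_A}. Choosing $m = N > N/2$, Lemma \ref{lemma31}(ii) then provides the crucial $\tau$-independent estimate
\[
\|u_\tau\|_{L^\infty} + \|v_\tau\|_{L^\infty} \leqslant C \|f_k\|_{L^N} =: C_k .
\]
Combining this bound with \eqref{P1} and \eqref{P3} yields $|g(x, u_\tau, v_\tau)| \leqslant c_2 C_k^{r+\theta}$ and $|h(x, u_\tau, v_\tau)| \leqslant d_2 C_k^{r+\theta}$ a.e.\ in $\Omega$. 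I then set
\[
\tau_k := \max\{k,\; 1 + (c_2 + d_2)\, C_k^{r+\theta}\},\qquad (u_k, v_k) := (u_{\tau_k}, v_{\tau_k}).
\]
By construction $\tau_k \to \infty$, and $T_{\tau_k}$ acts as the identity on both $g(x, u_k, v_k)$ and $h(x, u_k, v_k)$, so $(u_k, v_k)$ solves \eqref{aprox}.

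For the sign conclusions, assume $f \geqslant 0$ a.e.\ and choose the approximating sequence so that $f_k \geqslant 0$ a.e.\ (this is consistent with the hypothesis $|f_k| \leqslant |f|$; one may take e.g.\ $f_k = T_k(f)$). To prove $v_k \geqslant 0$, I test the second equation of \eqref{aprox} against $\psi = -v_k^- \in W^{1,2}_0(\Omega)$ and exploit the ellipticity of $M$ together with $h(x, u_k, v_k) + 1/\tau_k \geqslant 0$ from \eqref{P4} to obtain $\alpha \int_\Omega |\nabla v_k^-|^2 \leqslant 0$, whence $v_k^- \equiv 0$. The strict positivity $v_k > 0$ a.e.\ follows by decomposing $v_k = w_k + z_k$, where $w_k$ solves $-\mbox{div}(M \nabla w_k) = 1/\tau_k$ with zero boundary data (so $w_k > 0$ a.e.\ by the strong maximum principle) and $z_k \geqslant 0$ solves $-\mbox{div}(M \nabla z_k) = h(x, u_k, v_k) \geqslant 0$. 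With $v_k \geqslant 0$ in hand, testing the first equation against $\varphi = -u_k^-$ and invoking \eqref{P22} together with $f_k \geqslant 0$ gives $\alpha \int_\Omega |\nabla u_k^-|^2 \leqslant 0$, hence $u_k \geqslant 0$ a.e.

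The principal obstacle is the selection of $\tau_k$: it must be large enough to deactivate the truncation on the pair $(u_{\tau_k}, v_{\tau_k})$ itself, and the potential circularity is avoided precisely because the $L^\infty$ bound in Lemma \ref{lemma31}(ii) does not depend on $\tau$. Everything else reduces to standard truncated test-function arguments and the strong maximum principle.
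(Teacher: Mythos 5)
Your proposal is correct and follows essentially the same route as the paper: apply Proposition \ref{prop1} with $\Phi=f_k$, invoke the $\tau$-independent $L^\infty$ estimate of Lemma \ref{lemma31}(ii) to pick $\tau_k$ so large that $T_{\tau_k}$ leaves $g(x,u_k,v_k)$ and $h(x,u_k,v_k)$ unchanged, and then derive the sign conclusions from the (weak) maximum principle for $u_k\geqslant0$, $v_k\geqslant0$ and from the strong maximum principle applied to the auxiliary problem with source $1/\tau_k$ for $v_k>0$. The only cosmetic deviations are that you use $\|f_k\|_{L^N}$ in place of $\|f_k\|_{L^\infty}$ when invoking Lemma \ref{lemma31}(ii), and you obtain $v_k>0$ via the decomposition $v_k=w_k+z_k$ rather than via the comparison principle stated in the paper; both are equivalent and equally rigorous.
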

\begin{proof}
Given $k>0$ consider $\tau> (c_2+d_2)C^{r+\theta}k^{r+\theta}$, where $C$ is given in Lemma \ref{lemma31} and $c_2$, $d_2$ in \eqref{P1}, \eqref{P3}. Let us recall the standard truncation $T_k(s)=\max (-k,\min(s,k))$ and then take $f_k=T_k(f)$. Thus, for $\Phi=f_k$, by combining Proposition \ref{prop1} and Lemma \ref{lemma31},  we obtain a couple $(u_k,v_k) \in W^{1,2}_0(\Omega) \cap L^\infty(\Omega) \times W^{1,2}_0(\Omega)\cap L^\infty(\Omega)$ solution for \eqref{P_A}.

Observe that $g_\tau(.,u_k,v_k)=g(.,u_k,v_k)$ and $h_\tau(.,u_k,v_k) = h(.,u_k,v_k)$ a.e. in $\Omega$. As a matter of fact, from \ref{P1}, Lemma \ref{lemma31} item $(ii)$ and the choice of $f_k$, we have
\begin{align*}
|g(x,u_k,v_k)| &\leqslant c_2 |u_k|^{r-1}|v_k|^{\theta+1}\\
&\leqslant c_2 C^{r+\theta}\|f_k\|_{L^\infty}^{r+\theta}\\
&\leqslant c_2 C^{r+\theta}k^{r+\theta}.
\end{align*}
Thence, since by the choice of $\tau>c_2 C^{r+\theta}k^{r+\theta}$, from \eqref{truncationshg} we have that $g_\tau$ coincides with $g$.  Analogously using the hypothesis \ref{P3} we conclude that  $h_\tau = h$.

Finally, remark that if $f\geqslant 0$ a.e. in $\Omega$ then $f_k \geqslant 0$ a.e. in $\Omega$ so that by the Weak Maximum Principle, $u_k\geq 0$ a.e. in $\Omega$.

Now we prove that $v_k>0$ a.e. in $\Omega$. In fact, consider $w_k \in C^{1,\alpha}(\Omega)$, with $0\leqslant \alpha <1$, the solution of
\begin{align}\label{2116}
    \begin{cases}
        \ -\mbox{div}(M(x)\nabla w_k) = \frac{1}{\tau_k} \  \ \mbox{in} \ \Omega, \\
        \ \ \ \ \ w_k = 0 \ \ \ \mbox{on} \ \ \partial \Omega.
    \end{cases}
\end{align}
Remark that, since $M\in W^{1,\infty}(\Omega)$  the existence of this $w_k$ is standard, for instance  see Corollary 8.36 in \cite{trudinger}. Then,  by a straightforward application of the Strong Maximum Principle of Vasquez in \eqref{2116}, see Theorem 4 in \cite{vazquez}, we obtain that $w_k>0$ in $\Omega$. After that, let us stress that
\begin{align*}
    - \mbox{div}(M(x)\nabla v_k) = h(x,u_k,v_k) + \frac{1}{\tau_k} \geqslant - \mbox{div}(M(x)\nabla w_k)
\end{align*}
then by the Comparison Principle  $v_k \geqslant w_k$ a.e. in $\Omega$ so that $v_k>0$ a.e. in $\Omega$.
\end{proof}

At this point, we are finally ready to obtain a set of delicate uniform a priori estimates which play a key role in our results. First, we  address the case with finite energy \[m\geqslant (r+\theta+1)^\prime.\]
Let us stress that we strongly use the fact that one of our approximate solutions of the decoupled system is strictly positive, i.e., $v_k$ satisfies $v_k>0$ a.e. in $\Omega$.
\begin{lemma}\label{teo47}
Let $f \in L^m(\Omega)$ where $m\geqslant (r+\theta+1)^\prime,$ and $f\geqslant 0$ a.e in $\Omega$, $r>1$ and $0<\theta<1.$ Then
\begin{align}\label{esti1}
\|u_k\|^2_{W^{1,2}_0} + \|v_k\|^2_{W^{1,2}_0} + \int_\Omega u_k^{r+\theta+1} + \int_{\Omega} u_k^{r} v_k^{\theta+1} \leqslant C \bigg(\|f\|_{L^m}^{\frac{r+\theta+1}{r+\theta}}+\frac{1}{\tau_k^2}\bigg),
\end{align}
where $C>0$ and $\tau_k\to\infty $ if $k\to \infty$.
\end{lemma}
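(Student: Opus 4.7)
My plan is to derive the bound by testing the two equations of \eqref{aprox} with $\varphi=u_k$ and $\psi=v_k$ respectively, and then applying Young's inequality at the precise threshold $m\ge (r+\theta+1)'$.

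First, since by Proposition \ref{cor1} we have $(u_k,v_k)\in(W^{1,2}_0(\Omega)\cap L^\infty(\Omega))^2$ with $u_k\ge 0$ and $v_k>0$ a.e., both test functions are admissible. Testing the first equation of \eqref{aprox} with $\varphi=u_k$, and using the ellipticity \eqref{P5}, the sign condition \eqref{P22}, the lower bound in \eqref{P1}, and $0\le f_k\le f$, I obtain
\[
\alpha\int_\Omega|\nabla u_k|^2+c_1\int_\Omega u_k^r v_k^{\theta+1}\le\int_\Omega f u_k.
\]
Testing the second equation with $\psi=v_k$ and using \eqref{P5} and the upper bound in \eqref{P3} yields
\[
\alpha\int_\Omega|\nabla v_k|^2\le d_2\int_\Omega u_k^r v_k^{\theta+1}+\frac{1}{\tau_k}\int_\Omega v_k.
\]
The residual $(1/\tau_k)\int_\Omega v_k$ is absorbed by Poincar\'e plus Young's inequality, producing the $C/\tau_k^2$ remainder featured in \eqref{esti1}.

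The essential step is then to control $\int_\Omega f u_k$ with the correct power of $\|f\|_{L^m}$. Applying Young's inequality with the conjugate exponents $r+\theta+1$ and $(r+\theta+1)'=(r+\theta+1)/(r+\theta)$, and invoking $m\ge(r+\theta+1)'$ to pass from $L^{(r+\theta+1)'}$ to $L^m$, I arrive at
\[
\int_\Omega f u_k\le \delta\int_\Omega u_k^{r+\theta+1}+C_\delta\|f\|_{L^m}^{(r+\theta+1)/(r+\theta)}
\]
for any $\delta>0$. This places the expected power $(r+\theta+1)/(r+\theta)$ of $\|f\|_{L^m}$ on the right-hand side.

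The main obstacle is the final absorption of the $\delta\int_\Omega u_k^{r+\theta+1}$ term into the coupling integral $c_1\int_\Omega u_k^r v_k^{\theta+1}$ on the left, because no pointwise bound $u_k^{r+\theta+1}\le C\,u_k^r v_k^{\theta+1}$ is available. I would use the Sobolev embedding $W^{1,2}_0(\Omega)\hookrightarrow L^{2^*}(\Omega)$ together with the estimate on $\|\nabla v_k\|_{L^2}^2$ derived above to obtain $\|v_k\|_{L^{2^*}}^{2}\le C\int_\Omega u_k^r v_k^{\theta+1}+C/\tau_k^2$, and then combine this with a H\"older estimate to produce an integrated inequality of the form $\int_\Omega u_k^{r+\theta+1}\le C_1\int_\Omega u_k^r v_k^{\theta+1}+C_2/\tau_k^2$; here the restriction $0<\theta<1$ is used to guarantee that the resulting exponent in the $v_k$-part stays strictly below $2^*$, so that the Sobolev bound closes. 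With $\delta$ chosen sufficiently small relative to $c_1/C_1$, the absorption closes the estimate, and all four quantities appearing on the left-hand side of \eqref{esti1} are simultaneously controlled by $C(\|f\|_{L^m}^{(r+\theta+1)/(r+\theta)}+1/\tau_k^2)$, with a constant $C$ independent of $\tau_k$.
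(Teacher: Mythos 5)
Your first three steps are sound and match the opening moves of the paper's proof: testing the first equation with $\varphi=u_k$, the second with $\psi=v_k$, and applying Young's inequality with exponents $r+\theta+1$ and $(r+\theta+1)'$ to split $\int_\Omega fu_k$. You also correctly identify the crux of the argument. The problem is that the way you propose to clear the obstacle does not work.

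The inequality you want, $\int_\Omega u_k^{r+\theta+1}\leqslant C_1\int_\Omega u_k^r v_k^{\theta+1}+C_2/\tau_k^2$, cannot be deduced from the chain you sketch. From testing with $\psi=v_k$ and absorbing the $\tau_k$-term, you obtain an \emph{upper} bound $\|v_k\|_{L^{2^*}}^2\leqslant C\int_\Omega u_k^r v_k^{\theta+1}+C/\tau_k^2$. But an upper bound on $\|v_k\|_{L^{2^*}}$ carries no information about how small $v_k$ might be relative to $u_k$ on the set $\{u_k>v_k\}$, which is exactly where $u_k^{r+\theta+1}$ can dominate $u_k^r v_k^{\theta+1}$. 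H\"older's inequality only converts norms of one quantity into norms of another with the correct scaling; it cannot manufacture the required lower bound on $v_k$. So the ``integrated inequality'' step is a genuine gap, not a technical detail.

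What actually closes the argument in the paper is a different, specifically tailored test function in the \emph{second} equation, namely $\psi=u_k^{\theta+1}(v_k+\varepsilon)^{-\theta}$. After dropping the $\tfrac{1}{\tau_k}$-term, the lower bound \eqref{P3} turns the right-hand side into $d_1\int_\Omega u_k^{r+\theta+1}v_k^{\theta}(v_k+\varepsilon)^{-\theta}$, which tends to $d_1\int_\Omega u_k^{r+\theta+1}$ as $\varepsilon\to0^+$. The mixed gradient term $(\theta+1)\int_\Omega M\nabla v_k\cdot\nabla u_k\,u_k^{\theta}(v_k+\varepsilon)^{-\theta}$ is then split over $\Omega_1=\{u_k\leqslant v_k+\varepsilon\}$ and $\Omega_2=\{u_k>v_k+\varepsilon\}$ and handled by Young's inequality, using the negative term $-\theta\int_\Omega M\nabla v_k\cdot\nabla v_k\,u_k^{\theta+1}(v_k+\varepsilon)^{-(\theta+1)}$ as absorbent on $\Omega_2$ and $2\theta<\theta+1$ on $\Omega_1$. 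This yields
\[\int_\Omega u_k^{r+\theta+1}\leqslant C\Big(\int_\Omega|\nabla u_k|^2+\int_\Omega|\nabla v_k|^2\Big),\]
which is the bound that lets the $\int_\Omega u_k^{r+\theta+1}$ produced by Young's inequality be absorbed. In short, the resolution of the obstacle uses the \emph{structure} of the second equation (in particular that $v_k>0$ a.e. and that $h$ is bounded below by $d_1u_k^r v_k^\theta$), not a Sobolev/H\"older manipulation of norms already at hand.
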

\begin{proof}

Let us take $\varepsilon>0$. By considering $\psi =u_k^{\theta+1}(v_k+\varepsilon)^{-\theta}$ as a test function in the second equation of \eqref{aprox}, after dropping the positive term, we get
\begin{align*}
\int_\Omega h(x,u_k,v_k)u_k^{\theta+1}(v_k+\varepsilon)^{-\theta} &\leqslant (\theta+1) \int_\Omega M(x)\nabla v_k \cdot\nabla u_k u_k^\theta (v_k+\varepsilon)^{-\theta} \\
&-\theta\int_\Omega M(x)\nabla v_k \cdot \nabla v_k u_k^{\theta+1}(v_k+\varepsilon)^{-(\theta+1)}.
\end{align*}
Then by \eqref{P3}, \eqref{P4} and \eqref{P5}, it is clear that
%\begin{align}\nonumber
%\int_\Omega u_k^{r+\theta+1}v_k^{\theta+1}(v_k+\varepsilon)^{-(\theta+1)} &+ \int_\Omega M(x)\nabla v_k\cdot \nabla v_k u_k^{\theta+1} (v_k+\varepsilon)^{-(\theta+2)}\left[\theta v_k-\varepsilon\right]\\ \label{contaI}
%& \ \leqslant (\theta+1)\beta\int_\Omega|\nabla v_k||\nabla u_k| u_k^{\theta} (v_k+\varepsilon)^{-(\theta+1)}v_k,
%\end{align}

%and then,
\begin{align}
\label{contaI}
\nonumber\int_\Omega u_k^{r+\theta+1}v_k^{\theta}(v_k+\varepsilon)^{-\theta} &+ \alpha\theta\int_\Omega |\nabla v_k |^2 u_k ^{\theta+1} (v_k+\varepsilon)^{-(\theta+1)} \\ &\leqslant	(\theta+1)\beta\int_\Omega |\nabla v_k| |\nabla u_k| u_k^\theta (v_k+\varepsilon)^{-\theta}.
\end{align}
Before letting $\epsilon \to 0$, let us consider $\Omega_1=\{x\in\Omega;\ \frac{u_k}{v_k+\varepsilon}\leqslant 1\}$ and $\Omega_2=\{x\in\Omega;\ \frac{u_k}{v_k+\varepsilon}>1\}$, so that $\Omega=\Omega_1\cup \Omega_2$.
%\begin{align*}
%&\int_\Omega|\nabla v_k||\nabla u_k| u_k^\theta (v_k+\varepsilon)^{-\theta}=\int_{\Omega_1} |\nabla v_k||\nabla u_k| u_k^\theta (v_k+\varepsilon)^{-\theta} + \int_{\Omega_2} |\nabla v_k||\nabla u_k| u_k^\theta (v_k+\varepsilon)^{-\theta}.
%\end{align*}
In this way, by Young's inequality
\begin{align*}
(\theta+1)\beta\int_{\Omega} |\nabla v_k| |\nabla u_k| u_k^\theta (v_k+\varepsilon)^{-\theta}
 & \leqslant C_\eta (\theta+1)\beta \int_{\Omega} |\nabla u_k|^2 + \eta(\theta+1)\beta\int_{\Omega} |\nabla v_k|^2\frac{u_k^{2\theta}}{(v_k+\varepsilon)^{2\theta}}\\
\end{align*}
Now, remark that since $\frac{u_k}{v_k+\varepsilon}> 1$ in $\Omega_2$ and $2\theta<\theta+1$ , there follows that
\begin{align*}%\label{conta3}
\nonumber(\theta+1)\beta\int_\Omega |\nabla v_k| |\nabla u_k| u_k^\theta (v_k+\varepsilon)^{-\theta}
&\leqslant C_\eta(\theta+1)\beta\int_{\Omega} |\nabla u_k|^2 + \eta(\theta+1)\beta\int_{\Omega_1}|\nabla v_k|^2 \\
&+\eta(\theta+1)\beta\int_{\Omega_2} |\nabla v_k|^2 \frac{u_k^{\theta+1}}{(v_k+\varepsilon)^{\theta+1}}
\end{align*}
and hence, by combining the above estimate with \eqref{contaI}, we have
\begin{align*}
\int_\Omega u_k^{r+\theta+1}v_k^{\theta}(v_k+\varepsilon)^{-\theta} &+ \alpha\theta\int_\Omega |\nabla v_k |^2 u_k ^{\theta+1} (v_k+\varepsilon)^{-(\theta+1)} \leqslant C_\eta(\theta+1)\beta\int_{\Omega} |\nabla u_k|^2 \\
&+ \eta(\theta+1)\beta\int_{\Omega}|\nabla v_k|^2
+\eta(\theta+1)\beta\int_{\Omega} |\nabla v_k|^2 \frac{u_k^{\theta+1}}{(v_k+\varepsilon)^{\theta+1}}.
\end{align*}
Thus, by taking $\eta=\frac{\theta \alpha}{(\theta+1)\beta}$, and $C=\max\left\{C_\eta(\theta+1)\beta,\theta \alpha\right\}$ and by the Fatou Lemma, we arrive at
\begin{align}\label{eqth1.7g}
\int_\Omega u_k^{r+\theta+1} \leqslant \liminf_{\varepsilon \to 0} \int_\Omega u_k^{r+\theta+1}v_k^{\theta}(v_k+\varepsilon)^{-\theta} \leqslant C \left[\int_\Omega |\nabla v_k|^2 + \int_\Omega |\nabla u_k|^2 \right].
\end{align}
Now, let us proceed to the other estimates. Indeed, by choosing $\varphi=u_k$ in the first equation of \eqref{aprox}, it is clear that
\begin{align}\label{eqth1.8g}
	\alpha\int_\Omega |\nabla u_k|^2 + \int_\Omega g(x,u_k,v_k)u_k \leqslant \int_\Omega f_ku_k.
\end{align}
In particular, by combining H\"older's inequality with
\begin{align*}
c_1\int_\Omega u_k^r v_k^{\theta+1} \leqslant \int_\Omega g(x,u_k,v_k)u_k \leqslant \int_\Omega f u_k.
\end{align*}
%then
%\begin{align*}
%c_1 \int_\Omega u_k^r v_k^{\theta+1}\leqslant \|f\|_{L^m} %\|u_k\|_{L^{m^\prime}},
%\end{align*}
then
\begin{align}\label{eqth1.9g}
	\int_\Omega u_k^rv_k^{\theta+1} \leqslant C\|f\|_{L^m}\|u_k\|_{L^{r+\theta+1}},
\end{align}
where we strongly used that $f\in L^m(\Omega)$ for $m\geq (r+\theta+1)^\prime$.

Further, by taking $\psi=v_k$ in the second equation of \eqref{aprox}, from\eqref{P3} and \eqref{P4},  it is clear that
\begin{align*}
	\alpha\int_\Omega |\nabla v_k|^2 \leqslant \int_\Omega \Big(h(x,u_k,v_k)+\frac{1}{\tau_k}\Big)v_k &\leqslant d_2\int_\Omega u_k^r v_k^{\theta+1}+\frac{1}{\tau_k}\int_\Omega v_k \\
 &\leqslant C\|f\|_{L^m} \|u_k\|_{L^{r+\theta+1}}+\frac{C}{\tau_k}\|\nabla v_k\|_{L^2}.
 \end{align*}
However, it is clear that
\[\frac{C}{\tau_k}\|\nabla v_k\|_{L^2} \leq \frac{C_\alpha}{\tau_k^2}+\frac{\alpha}{2}\|\nabla v_k\|^2_{L^2}\]
whereas, by combining with \eqref{eqth1.8g}, give us
\begin{align}\label{grad}
	\int_\Omega |\nabla u_k|^2 + \int_\Omega |\nabla v_k|^2 \leqslant C \big(\|f\|_{L^m} \|u_k\|_{L^{r+\theta+1}}+\frac{1}{\tau_k^2}\big).
\end{align}
In particular, by \eqref{eqth1.7g} and Young's inequality, we obtain
\begin{align}\label{estap}
	\|u_k\|^{r+\theta+1}_{L^{r+\theta+1}} \leqslant C \bigg(\|f\|_{L^m}^\frac{r+\theta+1}{r+\theta}+\frac{1}{\tau_k^2}\bigg).
\end{align}
Therefore, gathering \eqref{eqth1.9g}, \eqref{estap} with \eqref{grad}, we finally have
\begin{align*}
\|u_k\|^2_{W^{1,2}_0}+\|v_k\|^2_{W^{1,2}_0}+\int_{\Omega} u_k^{r} v_k^{\theta+1} \leqslant C \bigg(\|f\|_{L^m}^{\frac{r+\theta+1}{r+\theta}}+\frac{1}{\tau_k^2}\bigg),
\end{align*}
where we stress that $\tau_k\to \infty$ if $k\to\infty$
\end{proof}

Next, we will address the case   \[\left(\frac{r-\theta+1}{1-2\theta}\right)^\prime<m<(r+\theta+1)^\prime,\]
which involves even more delicate estimates. Indeed, observe that since $m< (r+\theta+1)^\prime$ the method used in Lemma \ref{teo47} needs to the modified in a way that comprises estimates under the energy regimes. In order to do that, we have to consider another set of test functions to compensate the additional singularity in our system.

\begin{lemma}\label{lem4.10}
Let $f \in L^m(\Omega)$ with $f\geqslant 0$, $\left(\frac{r-\theta+1}{1-2\theta}\right)'\leqslant m<(r+\theta+1)'$ and $0<\theta<\frac{1}{2}$. Then
\begin{align}\label{esti2}
&\nonumber\|u_k\|^p_{W^{1,p}_0}+\int_\Omega u_k^{r-\theta+1} + \int_\Omega u_k^r v_k^{1-\theta} \leqslant C\bigg(\|f\|_{L^m}^{\frac{r-\theta+1}{r+\theta}}+\frac{1}{\tau_k}\bigg)  \ \mbox{ and }\\
&\|v_k\|_{W^{1,q}_0}^q \leqslant C\bigg( \|f\|_{L^m}^{\frac{r-\theta+1}{r+\theta}}+\frac{1}{\tau_k}\bigg)^\frac{N}{N-2\theta}
\end{align}
where $p=\frac{2(r-\theta+1)}{r+\theta+1}$, $q=\frac{2N(1-\theta)}{N-2\theta}$ and  $C>0$, where $\tau_k\to\infty $ if $k\to \infty$.  Moreover, if $r\geqslant\frac{N+2}{N-2}$, then $\{u_k\}$ and $\{v_k\}$ are bounded in $W^{1,q}_0(\Omega)$.
\end{lemma}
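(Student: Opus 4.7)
The plan is to test the approximate system \eqref{aprox} with three carefully chosen sub-energy test functions, combine the resulting inequalities via a pointwise majorisation on $\Omega$, and close the loop with Young's inequality. Abbreviate $A_u = \int_\Omega |\nabla u_k|^2 u_k^{-2\theta}$, $A_v = \int_\Omega |\nabla v_k|^2 v_k^{-2\theta}$, $Y = \int_\Omega u_k^{r-\theta+1}$, $B_1 = \int_\Omega u_k^{r-2\theta} v_k^{\theta+1}$, $B_2 = \int_\Omega u_k^r v_k^{1-\theta}$. First, test the first equation of \eqref{aprox} with $\varphi_\varepsilon = (u_k+\varepsilon)^{1-2\theta} - \varepsilon^{1-2\theta}$ and pass $\varepsilon \to 0^+$ via Lemma \ref{teclema}; since the Hölder conjugate of $(r-\theta+1)/(1-2\theta)$ is precisely the endpoint $(r-\theta+1)/(r+\theta)$ dictated by the hypothesis on $m$, one obtains $A_u + B_1 \leqslant C \|f\|_{L^m} Y^{(1-2\theta)/(r-\theta+1)}$. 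Next, test the second equation with $\psi_\varepsilon = (v_k+\varepsilon)^{1-2\theta} - \varepsilon^{1-2\theta}$ (admissible since $v_k > 0$ a.e.) and use Remark \ref{1029}; after absorbing $\int_\Omega v_k^{1-2\theta}/\tau_k$ via Young's inequality combined with a chain-rule Sobolev bound, one gets $A_v \leqslant C B_2 + C/\tau_k$.

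The crucial third test, designed so that the $h$-term extracts the unweighted quantity $Y$, is $\psi_\varepsilon = [(u_k+\varepsilon)^{1-\theta} - \varepsilon^{1-\theta}](v_k+\varepsilon)^{-\theta}$, which lies in $W^{1,2}_0(\Omega) \cap L^\infty(\Omega)$ because $v_k>0$ a.e.\ and $u_k, v_k \in L^\infty$, and has vanishing trace. A direct computation of $\nabla \psi_\varepsilon$ inserted into the second equation yields the identity $(1-\theta) I_1^\varepsilon - \theta I_2^\varepsilon = H^\varepsilon + T^\varepsilon$, where $I_1^\varepsilon$ is the mixed gradient integral, $I_2^\varepsilon \geqslant 0$ by the ellipticity of $M$, and $H^\varepsilon, T^\varepsilon \geqslant 0$. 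Dropping $I_2^\varepsilon$ and $T^\varepsilon$ and applying Cauchy--Schwarz to $I_1^\varepsilon$ gives $H^\varepsilon \leqslant (1-\theta)\beta \sqrt{A_u^\varepsilon A_v^\varepsilon}$. Passing to $\varepsilon \to 0^+$ by dominated convergence---the key compensation being $v_k^\theta(v_k+\varepsilon)^{-\theta} \leqslant 1$, so that $h\psi_\varepsilon$ is dominated by $C u_k^{r-\theta+1} \in L^1$ for each fixed $k$---together with $h u_k^{1-\theta} v_k^{-\theta} \geqslant d_1 u_k^{r-\theta+1}$, I obtain $Y^2 \leqslant C A_u A_v$.

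The algebraic identity closing the loop is the pointwise inequality $B_2 \leqslant B_1 + Y$: on $\{u_k \leqslant v_k\}$ the estimate $(u_k/v_k)^{2\theta}\leqslant 1$ gives $u_k^r v_k^{1-\theta}\leqslant u_k^{r-2\theta}v_k^{\theta+1}$, while on $\{u_k > v_k\}$ the estimate $(v_k/u_k)^{1-\theta}\leqslant 1$ gives $u_k^r v_k^{1-\theta} \leqslant u_k^{r-\theta+1}$. Substituting this together with the earlier estimates into $Y^2 \leqslant C A_u A_v$ produces
\[ Y^2 \leqslant C \|f\|_{L^m} Y^{(1-2\theta)/(r-\theta+1)} (B_1 + Y + 1/\tau_k), \]
and each dominant term on the right yields, by Young's inequality with the conjugate pair $((r-\theta+1)/(r+\theta), (r-\theta+1)/(1-2\theta))$, the target bound $Y \leqslant C(\|f\|_{L^m}^{(r-\theta+1)/(r+\theta)} + 1/\tau_k)$; then $B_1, B_2, A_u, A_v$ inherit analogous bounds. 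The $W^{1,p}_0$ estimate follows from the Hölder decomposition $\|\nabla u_k\|_{L^p}^p \leqslant A_u^{p/2} Y^{(2-p)/2}$, thanks to the identity $2p\theta/(2-p) = r-\theta+1$. For $v_k$, the chain rule plus Sobolev applied to $v_k^{1-\theta}$ yield $\int_\Omega v_k^{q^*} \leqslant C A_v^{N/(N-2)}$ (using $(1-\theta) 2^* = q^*$), and the parallel Hölder decomposition gives $\|\nabla v_k\|_{L^q}^q \leqslant A_v^{q/2}(\int_\Omega v_k^{q^*})^{(2-q)/2} \leqslant C A_v^{N/(N-2\theta)}$, matching the announced exponent. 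The addendum for $r \geqslant (N+2)/(N-2)$ follows from the same Sobolev--Hölder recipe applied to $u_k^{1-\theta}$.

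I expect the main obstacle to be the third test and its $\varepsilon\to 0^+$ limit: one must verify that $\psi_\varepsilon\in W^{1,2}_0(\Omega)$, establish dominated convergence of $H^\varepsilon$ despite the monotone blow-up $(v_k+\varepsilon)^{-\theta}\uparrow v_k^{-\theta}$ (potentially $+\infty$), and confirm that discarding $T^\varepsilon$ and $\theta I_2^\varepsilon$ does not compromise the bound. A secondary technicality is the Young bookkeeping so that every error contribution collapses into the single stated $1/\tau_k$ correction on the right.
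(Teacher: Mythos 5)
Your proposal is correct and follows essentially the same strategy as the paper: the same three test functions — $(u_k+\varepsilon)^{1-2\theta}-\varepsilon^{1-2\theta}$ in the first equation, $(v_k+\varepsilon)^{1-2\theta}-\varepsilon^{1-2\theta}$ in the second, and a third test in the second equation combining a power of $u_k+\varepsilon$ with $(v_k+\varepsilon)^{-\theta}$ (the paper uses $(u_k+\varepsilon)^{1-\theta}(v_k+\varepsilon)^{-\theta}$; your $-\varepsilon^{1-\theta}$ correction is a harmless fix that makes the test genuinely vanish on $\partial\Omega$). The one noteworthy stylistic difference is in closing the loop: you use Cauchy--Schwarz directly to obtain the multiplicative inequality $Y^2\leqslant C\,A_u A_v$, while the paper applies Young with a free parameter to get the additive estimate $c_1 Y\leqslant \eta A_v + C_\eta A_u$ and then absorbs $A_v$ through the chain \eqref{vggam}, \eqref{tec6}, \eqref{tetagama}; the two are equivalent, and your multiplicative version is slightly cleaner. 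For the addendum, your route (Sobolev--Hölder on $u_k^{1-\theta}$, giving $\|\nabla u_k\|_{L^q}^q \leqslant C A_u^{N/(N-2\theta)}$) actually yields the $W^{1,q}_0$ bound on $\{u_k\}$ without the hypothesis $r\geqslant\frac{N+2}{N-2}$, whereas the paper simply observes $q\leqslant p$ under that hypothesis and uses the inclusion $W^{1,p}_0\subset W^{1,q}_0$; both are valid.
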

\begin{proof}
Let us consider $\varphi_\varepsilon=(u_k+\varepsilon)^\gamma-\varepsilon^\gamma$, $0<\epsilon \leq 1$, $0<\gamma<1$, as a test function in the first equation of \eqref{aprox}, so that
\begin{align}\label{varepi}
\alpha \gamma \int_\Omega |\nabla u_k|^2(u_k+\varepsilon)^{\gamma-1} + \int_\Omega g(x,u_k,v_k)\left[(u_k+\varepsilon)^\gamma-\varepsilon^\gamma\right] \leqslant \int_\Omega f_k \left[(u_k+\varepsilon)^\gamma-\varepsilon^\gamma\right].	
\end{align}
Then, by using that $u_k\in L^\infty(\Omega)$ and the Dominated Convergence Theorem, it is clear that
%For the right hand side, if $0<\varepsilon \leqslant 1,$ we have
%\begin{align*}
%|f_k[(u_k+\varepsilon)^\gamma-\varepsilon^\gamma]| \leqslant |f|\cdot[(\|u_k\|_{L^\infty}+\varepsilon)^\gamma+\varepsilon^\gamma].
%\end{align*}
%By taking $0<\varepsilon\leqslant1$, we have
%%\begin{align*}
%|f_k[(u_k+\varepsilon)^\gamma-\varepsilon^\gamma]| \leqslant %C|f|\cdot(\|u_k\|_{L^\infty}^\gamma+1) \ \in \ L^1(\Omega).
%\end{align*}
%Moreover
%\begin{align*}
%f_k[(u_k+\varepsilon)^\gamma-\varepsilon^\gamma]\to f_ku_k^\gamma \ \ %%\mbox{a.e. in} \ \ \Omega \ \ \mbox{when} \ \ \varepsilon\to0^+,
%\end{align*}
%and then, by the Dominated Convergence Theorem, one has that
\begin{align*}
\lim_{\varepsilon\to0}\int_\Omega f_k[(u_k+\varepsilon)^\gamma-\varepsilon^\gamma]=\int_\Omega f_ku_k^\gamma.
\end{align*}
Thus, by taking $\varepsilon\to0$ in \eqref{varepi}, by recalling  Lemma \ref{teclema} and  \eqref{P1}, we end up with
%\begin{align*}%\label{tec1}
%\alpha\gamma\int_\Omega |\nabla u_k|^2 u_k^{\gamma-1} + \int_\Omega %h(x,u_k,v_k) u_k^\gamma  \leqslant \int_\Omega f u_k^\gamma,
%	\end{align*}
%%%%%%%%%%%%%+
%so that by we end up with
\begin{align}\label{uhgam}
\alpha\gamma \int_{\Omega_+^k} |\nabla u_k|^2 u_k^{\gamma-1} + c_1 \int_\Omega u_k^{r-1+\gamma}v_k^{\theta+1} \leqslant \int_\Omega f u_k^\gamma<\infty,
\end{align}
which is finite for every fixed $k\in \mathbb{N}$, where $\Omega_+^k=\{u_k>0\}$.
Now, we address the set of estimates arising as a byproduct of {\bf the coupling }between both equations of our system.
We must stress that for us, the fact that $v_k>0$ a.e. in $\Omega$ will be crucial. As a matter of fact, by considering $\psi = (v_k+\varepsilon)^{\gamma} - \varepsilon^\gamma$ in the second equation of \eqref{aprox} it is clear that
	\begin{align}\label{numerou}
			\alpha \gamma \int_\Omega |\nabla v_k|^2(v_k+\varepsilon)^{\gamma-1} \leqslant \int_\Omega h(x,u_k,v_k)[(v_k+\varepsilon)^{\gamma}-\varepsilon^\gamma]+\frac{1}{\tau_k}\int_\Omega (v_k+\varepsilon)^\gamma-\varepsilon^\gamma.	
	\end{align}
Then, once more, by taking $\varepsilon\to0$ and by recalling \eqref{P3}, from Lemma \ref{teclema}, we obtain
\begin{align}\label{vggam}  \alpha \gamma \int_\Omega |\nabla v_k|^2v_k^{\gamma-1} \leqslant \int_\Omega \Big(h(x,u_k,v_k)+\frac{1}{\tau_k}\Big)v_k^{\gamma}\leqslant d_2 \int_\Omega u_k^r v_k^{\gamma+\theta}+\frac{1}{\tau_k}\int_\Omega v_k^\gamma %< \infty,
\end{align}
%which is finite for every $k$ fixed.

Further,  take $\psi=(u_k+\varepsilon)^{\gamma+\theta}(v_k+\varepsilon)^{-\theta}$ in the second equation of \eqref{aprox}. By dropping the positive term, one has that
	\begin{align*}
		 \int_\Omega h(x,u_k,v_k) (u_k+\varepsilon)^{\gamma+\theta} (v_k+\varepsilon)^{-\theta}&\leqslant  (\gamma+\theta)\int_\Omega M(x)\nabla v_k\cdot \nabla u_k\ (u_k+\varepsilon)^{\gamma+\theta-1}(v_k+\varepsilon)^{-\theta}	\\
  &-\theta\int_\Omega M(x)\nabla v_k \cdot\nabla v_k (u_k+\varepsilon)^{\gamma+\theta}(v_k+\varepsilon)^{-(\theta+1)}
	\end{align*}
	Then, by \eqref{P3} and \eqref{P4} it is clear that
	\begin{align}\label{tec3}
		\nonumber
  c_1\int_\Omega u_k^r v_k^{\theta}(u_k+\varepsilon)^{\gamma+\theta}  (v_k+\varepsilon)^{-\theta}
  &+\alpha\theta \int_\Omega |\nabla v_k|^2 (u_k+\varepsilon)^{\gamma+\theta} (v_k+\varepsilon)^{-(\theta+1)}
		 \\
&\nonumber\leqslant  \beta(\gamma+\theta)\int_\Omega |\nabla v_k||\nabla u_k|(u_k+\varepsilon)^{\gamma+\theta-1} (v_k+\varepsilon)^{-\theta}
\\
&= \beta(\gamma+\theta)\int_{\Omega_+^k} |\nabla v_k||\nabla u_k|(u_k+\varepsilon)^{\gamma+\theta-1} (v_k+\varepsilon)^{-\theta},
	\end{align}
since $\nabla u_k =0 $ a.e. in the set $u_k=0$, where $\Omega_+^k=\{u_k>0\}$.

Given $\eta >0$, by Young's inequality, we have that
	\begin{align*}
\int_{\Omega^k_+} |\nabla v_k||\nabla u_k| &(u_k+\varepsilon)^{\gamma+\theta-1} (v_k+\varepsilon)^{-\theta}\leqslant \int_{\Omega^k_+} |\nabla v_k||\nabla u_k| (u_k+\varepsilon)^{\gamma+\theta-1} |v_k|^{-\theta}\\
&\leqslant \eta\int_{\Omega} |\nabla v_k|^2 v_k^{-2\theta} + C_\eta\int_{\Omega^k_+} |\nabla u_k|^2 (u_k+\varepsilon)^{2(\gamma+\theta-1)}.	
	\end{align*}
By combining the above inequality and \eqref{tec3} we get
\begin{align}
\nonumber
  c_1\int_\Omega u_k^r v_k^{\theta}(u_k&+\varepsilon)^{\gamma+\theta}  (v_k+\varepsilon)^{-\theta}
  +\alpha\theta \int_\Omega |\nabla v_k|^2 (u_k+\varepsilon)^{\gamma+\theta} (v_k+\varepsilon)^{-(\theta+1)}\\
  \nonumber
&\leqslant \beta(\gamma+\theta)\eta\int_\Omega |\nabla v_k|^2 v_k^{-2\theta} + \beta(\gamma+\theta)C_\eta\int_{\Omega^k_+} |\nabla u_k|^2 (u_k+\varepsilon)^{2(\gamma+\theta-1)}.
\end{align}
 At this point, it is natural to choose an adequate $\gamma$ in order to guarantee that certain crucial exponents coincide, what allows us to explore the coupling between the equations. Indeed, by fixing $\gamma=1-2\theta$ so that $2(\gamma+\theta-1)=\gamma -1=-2\theta$, after dropping the positive term there follows that
\begin{align}\label{tec4}
\nonumber
  c_1\int_\Omega u_k^r v_k^{\theta}(u_k+\varepsilon)^{1-\theta}  (v_k+\varepsilon)^{-\theta}
 &\leqslant \beta(1-\theta)\eta\int_\Omega |\nabla v_k|^2 v_k^{-2\theta}
 \\&+ \beta(1-\theta)C_\eta\int_{\Omega_+^k} |\nabla u_k|^2 (u_k+\varepsilon)^{-2\theta}.
\end{align}

%	So
%	\begin{align}\label{tec4}
%		\nonumber &\frac{(\gamma+\theta)}{2}\int_\Omega |\nabla v_k|^2 v_k^{-2\theta} + \frac{(\gamma+\theta)}{2}\int_\Omega |\nabla u_k|^2 u_k^{2(\gamma+\theta+1)} \geqslant\\
%		& \ \ -\int_\Omega |\nabla v_k|^2 u_k^{\gamma+\theta} (v_k^2+\varepsilon)^{-\frac{(\theta+3)}{2}}\varepsilon + \int_\Omega |u_k|^{r+\gamma+\theta} |v_k|^{\theta+1} (v_k^2+\varepsilon)^{-\frac{(\theta+1)}{2}}.
%	\end{align}
However, remark that by \eqref{uhgam}, for every $k$ fixed, we have $|\nabla u_k|^2u_k^{-2\theta} \in L^1(\Omega_+^k)$. Thus by taking $\varepsilon\to0$ in \eqref{tec4}, employing the Fatou Lemma combined with the Dominated Convergence Theorem, we arrive at
%	In addition, we have that
%	\begin{align*}
%		-\liminf_{\varepsilon\to 0}\int_\Omega|\nabla v_k|^2 \frac{u_k^{\gamma+\theta}}{(v_k^2+\varepsilon)^{\frac{\theta+3}{2}}} \geqslant -\limsup_{\varepsilon\to0}\int_\Omega |\nabla v_k|^2 \frac{u_k^{\gamma+\theta}}{|v_k|^{1+\theta}}\varepsilon = 0.
%	\end{align*}
%	Thus, applying Fatou Lemma in \eqref{tec4}, we have that
	\begin{align}%\label{tec5}
 \label{tec6}
	 c_1\int_\Omega u_k^{r-\theta+1} \leqslant\beta(1-\theta)\eta\int_\Omega |\nabla v_k|^2 v_k^{-2\theta} + \beta(1-\theta)C_\eta\int_{\Omega_+^k} |\nabla u_k|^2 u_k^{-2\theta}.
	\end{align}
	
 %At this point, it is natural to choose an adequate $\gamma$ in order to guarantee that certain crucial exponents coincide, what allows us to explore the coupling between the equations. Indeed, by fixing $\gamma=1-2\theta$ so that $2(\gamma+\theta-1)=\gamma -1=-2\theta$, there follows that
%	\begin{align}
 %c_1\int_\Omega u_k^{r-\theta+1} \leqslant\beta(1-\theta)\eta\int_\Omega |\nabla v_k|^2 v_k ^{-2\theta} + \beta(1-\theta)C_\eta\int_\Omega |\nabla u_k|^2u_k^{-2\theta}.	
	%\end{align}
	Now, it is clear that
	%\begin{align}\label{tec7}
	%\int_\Omega |u|^r |v|^{\theta+\gamma} = \int_{\{|u|\geqslant|v|\}} |u|^r |v|^{\theta+\gamma} + \int_{\{|u|\leqslant|v|\}} |u|^r |v|^{\theta+\gamma}.	
	%\end{align}
	\begin{align*}
	   \int_{\{u_k\leqslant v_k\}} u_k^r v_k ^{1-\theta}\leqslant \int_{\{u_k \leqslant v_k\}} u_k^r u_k^{-2\theta}v_k^{\theta+1}
    %&=\int_{\{u_k \leqslant v_k\}}u_k^{r-2\theta} v_k^{\theta+1} \\
    \leqslant \int_{\Omega}u_k^{r-2\theta}v_k^{\theta+1} \mbox{ and }
    \int_{\{u_k\geqslant v_k\}} u_k^r v_k^{1-\theta}\leqslant  \int_\Omega u_k^{r-\theta+1}.
	\end{align*}
	%On the other hand,
	%\begin{align*}
%		\int_{\{u_k\geqslant v_k\}} u_k^rv_k^{\gamma+\theta}=
%\int_{\{u_k\geqslant v_k\}} u_k^r v_k^{1-\theta}\leqslant \int_{\{u_k\geqslant v_k\}} u_k^r u_k^{1-\theta} \leqslant \int_\Omega u_k^{r-\theta+1}.
%	\end{align*}
%	Since $\gamma+\theta-1=-\theta \iff \gamma+\theta = 1-\theta.$\\ \\
	The latter estimates clearly guarantee that
	\begin{align}\label{tetagama}
		\int_\Omega u_k ^r v_k^{1-\theta} \leqslant \int_\Omega u_k^{r-\theta+1} + \int_\Omega u_k^{r-2\theta}v_k^{\theta+1}.
	\end{align}
Thus, by gathering \eqref{vggam}, \eqref{tec6} and \eqref{tetagama}, and by using \eqref{uhgam} twice, since $\gamma+\theta=1-\theta$, there follows
\begin{align*}%\label{tec8}
\nonumber
\dfrac{\alpha(1-2\theta)}{d_2}\int_\Omega |\nabla v_k|^2 v_k^{-2\theta}&\leqslant \int_\Omega u_k^{r-\theta+1} + \int_\Omega u_k^{r-1+\gamma}v_k^{\theta+1} +\frac{1}{\tau_k}\int_\Omega v_k^{1-2\theta}
\\
 &\leqslant \dfrac{\beta(1-\theta)\eta}{c_1}\int_\Omega |\nabla v_k|^2v_k^{-2\theta} + C \int_\Omega f u_k^{1-2\theta}
 +\frac{1}{\tau_k}\int_\Omega v_k^{1-2\theta},	
\end{align*}
where we combined \eqref{uhgam} and \eqref{tec6} in the right-hand side. If we consider $\eta=\frac{\alpha(1-2\theta)c_1}{2\beta(1-\theta)d_2}$, it is clear that
%Since $0<\theta<\frac{1}{3}$ we obtain that $C_0=\gamma-\frac{(1-\theta)}{2}$ is a strictly positive constant. So
\begin{align}\label{tec91}
\int_\Omega|\nabla v_k|^2 v_k^{-2\theta} \leqslant C\int_\Omega f u_k^{1-2\theta}+\frac{1}{\tau_k}\int_\Omega v_k^{1-2\theta}< \infty,
\end{align}
which is finite for every $k$ fixed.

At this point, for the sake of simplicity, observe that, given $\varepsilon>0$, by the locally Lipschitz Chain Rule,
$|\nabla v_k|^2(v_k+\varepsilon)^{-2\theta} = \frac{1}{(1-\theta)^2} |\nabla (v_k+\varepsilon)^{1-\theta}|^2$. Thence, by combining the Sobolev Embedding, the Fatou Lemma and the Dominated Convergence Theorem, we have
\begin{align}
\label{2046}
\nonumber
 \left(\int_\Omega v_k^{(1-\theta)2^*}\right)^{\frac{2}{2^*}}&\leqslant \liminf_{\varepsilon\to0^+} \left(\int_\Omega (v_k+\varepsilon)^{(1-\theta)2^*}\right)^{\frac{2}{2^*}}
 \\\nonumber
 &\leqslant
 \liminf_{\varepsilon\to0^+} C\left(\int_\Omega |\nabla (v_k+\varepsilon)^{1-\theta}|^2\right)^{\frac{2}{2^*}}
 \\
 &=\int_\Omega|\nabla v_k|^2 v_k^{-2\theta},
\end{align}
 where in the latter estimates we used that $|\nabla v_k|^2 v_k^{-2\theta}\in L^1(\Omega)$ for every $k\in\mathbb{N}$, fixed. Moreover, observe that $\frac{(1-\theta)2^*}{1-2\theta}>1$ and thus, by combining  H\"older's inequality for $\frac{(1-\theta)2^*}{1-2\theta}$ and $\frac{(1-\theta)2N}{N+2-4\theta}$, with the last estimate, we arrive at
\begin{align}
\label{sobv}
\nonumber
\int_\Omega v_k^{1-2\theta} &\leq C \bigg(\int_\Omega v_k^{(1-\theta)2^*}\bigg)^\frac{1-2\theta}{(1-\theta)2^*}
\\
&\leq C \bigg(\int_\Omega |\nabla v_k|^2 v_k^{-2\theta}\bigg)^\frac{1-2\theta}{2-2\theta}.
 \end{align}
 %C\|f\|_{L^m}^{\frac{r-\thea+1}{r+\theta}}.
%\end{align}
Further, from \eqref{tec91} and \eqref{sobv} with Young's inequality for $\frac{2-2\theta}{1-2\theta}$ and $2-2\theta$, we obtain
\begin{align}\nonumber
\int_\Omega|\nabla v_k|^2 v_k^{-2\theta} &\leqslant C\int_\Omega f u_k^{1-2\theta}+\frac{1}{\tau_k}C \bigg(\int_\Omega |\nabla v_k|^2 v_k^{-2\theta}\bigg)^\frac{1-2\theta}{2-2\theta}
\\\nonumber
&\leqslant C \bigg(\int_\Omega f u_k^{1-2\theta}+\frac{1}{\tau_k}\bigg)+\frac{1}{2\tau_k}\int_\Omega |\nabla v_k|^2 v_k^{-2\theta}
\end{align}
which clearly guarantees that
\begin{align}\label{tec9}
\int_\Omega|\nabla v_k|^2 v_k^{-2\theta} &\leqslant C\bigg(\int_\Omega f u_k^{1-2\theta}+\frac{1}{\tau_k}\bigg).\end{align}

Thus, by \eqref{tec6}, \eqref{uhgam} and \eqref{tec9}, we get
\begin{align*}
\int_\Omega u_k^{r-\theta+1} &\leqslant C\Big(\int_\Omega f u_k^{1-2\theta}+\frac{1}{\tau_k}\Big)
\\
&\leqslant C\Big(\|f\|_{L^m}||u_k||^{1-2\theta}_{L^{(1-2\theta)m^{\prime}}}+\frac{1}{\tau_k}\Big)\\
&\leqslant C\Big(\|f\|_{L^m}||u_k||^{1-2\theta}_{L^{r-\theta+1}}+\frac{1}{\tau_k}\Big),
\end{align*}
where we used that $(1-2\theta)m^\prime=\gamma m^\prime \leqslant r-\theta+1$, for $m^\prime\leqslant\frac{r-\theta+1}{1-2\theta}$.
Further, by means of another application of the Young inequality, for $\frac{r-\theta+1}{1-2\theta}$ and $\frac{r-\theta+1}{r+\theta}$, after straightforward compensations, we end up with
\begin{align*}
\int_\Omega u_k^{r-\theta+1}\leqslant C \bigg(\|f\|^{\frac{r-\theta+1}{r+\theta}}_{L^m}+\frac{1}{\tau_k}\bigg).	
\end{align*}
In particular, by \eqref{tec9},	
\begin{align}\label{tec10}
\int_\Omega u_k^{r-\theta+1} \leqslant C\bigg(\|f\|^{\frac{r-\theta+1}{r+\theta}}_{L^m}+\frac{1}{\tau_k}\bigg) \mbox{ and } \int_\Omega|\nabla v_k|^2 v_k^{-2\theta} \leqslant C \bigg(\|f\|^{\frac{r-\theta+1}{r+\theta}}_{L^m}+\frac{1}{\tau_k}\bigg).
\end{align}
Further, by gathering \eqref{uhgam}, \eqref{tetagama}, with an analogous argument used to prove \eqref{tec10} we obtain
\[\int_\Omega u_k ^r v_k^{1-\theta}  \leq C \bigg(\|f\|_{L^m}^{\frac{r-\theta+1}{r+\theta}}+\frac{1}{\tau_k}\bigg).\]

As a final step, we will handle the coupling estimates for the gradients. Indeed, on one hand,  observe that for $1\leqslant p<2$, by H\"{o}lder's inequality with exponents $\frac{2}{p}$ and $\frac{2}{2-p}$, we get

\begin{align*}
\int_\Omega |\nabla u_k|^p = \int_\Omega \frac{|\nabla u_k|^p}{(u_k+\varepsilon)^{\theta p}}(u_k+\varepsilon)^{\theta p}&\leqslant \left(\int_\Omega|\nabla u_k|^2 (u_k+\varepsilon)^{-2\theta}\right)^{\frac{p}{2}} \cdot \left( \int_\Omega (u_k+\varepsilon)^{\frac{2\theta p}{2-p}}\right)^{\frac{2-p}{2}} \\
&=\left(\int_{\Omega^k_+}|\nabla u_k|^2 (u_k+\varepsilon)^{-2\theta}\right)^{\frac{p}{2}} \cdot \left( \int_\Omega (u_k+\varepsilon)^{\frac{2\theta p}{2-p}}\right)^{\frac{2-p}{2}}.
\end{align*}
Then, since by \eqref{uhgam}, $|\nabla u_k|^2u_k^{-2\theta} \in L^1(\Omega^k_+)$, from the Lebesgue Convergence Theorem,  we can take $\varepsilon \to 0$ so that
\begin{align*}
\int_\Omega |\nabla u_k|^p \leqslant \left(\int_{\Omega^k_+}|\nabla u_k|^2 u_k^{-2\theta}\right)^{\frac{p}{2}} \cdot \left( \int_\Omega u_k^{\frac{2\theta p}{2-p}}\right)^{\frac{2-p}{2}}.
\end{align*}
By choosing $\frac{2\theta p}{2-p}=r-\theta+1$, i.e., $p=\frac{2(r-\theta+1)}{r+\theta+1}$, from \eqref{uhgam} and \eqref{tec10}, there follows that
\begin{align*}
\int_\Omega |\nabla u_k|^p \leqslant C \bigg(\|f\|_{L^m}^{\frac{r-\theta+1}{r+\theta}}+\frac{1}{\tau_k}\bigg).
\end{align*}
%where $C=C(\|f\|_{L^m})>0$.
%Since
%\begin{align*}
%\left[(1-\gamma)\frac{q}{2}\right]\frac{2}{2-q} = \frac{(1-\gamma)q}{2-q} = r+1-\theta,
%\end{align*}
%i.e.
%\begin{align*}
%q=\frac{2(r+1-\theta)}{r+\theta+1},
%\end{align*}
%follows that
%\begin{align*}
%\int_\Omega |\nabla u_k|^q &\leqslant C \|f\|_{L^m}^{(1+m)\frac{q}{2}} \|f\|_{L^m}^{\frac{2m}{2-q}}\\
%&\leqslant C \|f\|_{L^m}^{\widetilde{m}},
%\end{align*}
%where $\widetilde{m}=(1+m)\frac{q}{2}+\frac{2m}{2-q}.$\\ \\
Finally, by combining \eqref{2046} and \eqref{tec10}, we already know that

%Now, on the other hand, observe that  for $\epsilon>0$, from \eqref{tec10} one clearly has that
%\begin{align*}%\label{tec11}
%\int_\Omega |\nabla v_k|^2 (v_k+\varepsilon)^{-2\theta} \leqslant \int_\Omega |\nabla v_k|^2 v_k^{-2\theta} \leqslant C\|f\|_{L^m}^{\frac{r-\theta+1}{r+\theta}}.
%\end{align*}
%However, by the locally Lipschitz Chain Rule,
%$|\nabla v_k|^2(v_k+\varepsilon)^{-2\theta} = \frac{1}{(1-\theta)^2} |\nabla (v_k+\varepsilon)^{1-\theta}|^2$, so that from the Sobolev Embedding combined with the Fatou Lemma, we get

\begin{align}
\label{sobv1}
 \left(\int_\Omega v_k^{(1-\theta)2^*}\right)^{\frac{2}{2^*}} \leqslant
 C\bigg(\|f\|_{L^m}^{\frac{r-\theta+1}{r+\theta}}+\frac{1}{\tau_k}\bigg).
\end{align}
Moreover, recalling that $v_k>0$ a.e. in $\Omega$, and then for $1\leqslant q<2$ by  H\"{o}lder's inequality with exponent  $\frac{2}{q}$, we get
\begin{align*}
\int_\Omega |\nabla v_k|^{q} \leqslant \left(\int_\Omega|\nabla v_k|^2 v_k^{-2\theta}\right)^\frac{q}{2} \cdot \left(\int_\Omega v_k ^{\frac{2\theta q}{2-q}}\right)^{\frac{2-q}{2}}.
\end{align*}
Hence, so that it is enough to choose $(1-\theta)2^*=\frac{2\theta q}{2-q}$,i.e., $q=\frac{2N(1-\theta)}{N-2\theta}$, and  by \eqref{tec9} and \eqref{sobv1}, after straightforward computations, one arrives at
\begin{align*}
\int_\Omega |\nabla v_k|^{q} \leqslant C\bigg( \|f\|_{L^m}^{\frac{r-\theta+1}{r+\theta}}+\frac{1}{\tau_k}\bigg)^\frac{N}{N-2\theta}, \mbox{ where } C>0.
\end{align*}

In addition, remark that by the choice of $q$, if $r\geqslant\frac{N+2}{N-2}$, then $q<p$. Thus, by the gradient estimates obtained above, its clear that both $\{u_k\}$ and $\{v_k\}$ are bounded in $W^{1,q}_0(\Omega)$.
%\newpage

\end{proof}

\section{Proof of the Main Results}
We are now in position to prove our Main Results.
\subsection{Proof of Theorem \ref{lemah}}
By Lemma \ref{teo47}, there exist  $\{u_k\}$, $\{v_k\} \subset W^{1,2}_0(\Omega)$, and $u$, $v$ in $W^{1,2}_0(\Omega)$ such that, up to subsequences relabeled the same,
\begin{align*}
	&u_k\rightharpoonup u \ \mbox{weakly} \ \mbox{in} \ W^{1,2}_0(\Omega), \ u_k \to u \ \mbox{in} \ L^{s_1}(\Omega), \ \mbox{and a.e. in} \ \Omega;\\
	\nonumber&v_k\rightharpoonup v \ \mbox{weakly} \ \mbox{in} \ W^{1,2}_0(\Omega), \ v_k \to v \ \mbox{in} \ L^{s_2}(\Omega), \ \mbox{and a.e. in} \ \Omega,
\end{align*}
for $u\geqslant 0$, $v\geqslant 0$ a.e. in $\Omega$, and
\[\|u\|^2_{W^{1,2}_0} + \|v\|^2_{W^{1,2}_0} + \int_\Omega u^{r+\theta+1} + \int_{\Omega} u^r v^{\theta+1} \leqslant C \|f\|_{L^m}^{\frac{r+\theta+1}{r+\theta}},\]
where $s_1< \max\{2^*,r+\theta+1\}$ and $s_2< 2^*$, since $\tau_k \to +\infty$.

In order to prove that the couple satisfies \eqref{P_F} it is enough to show that
\begin{align*}
	g(x,u_k,v_k) \to g(x,u,v) \ \ \mbox{in} \ L^1(\Omega)	
\mbox{ and }
	h(x,u_k,v_k) \to h(x,u,v) \ \ \mbox{in} \ L^1(\Omega).	
\end{align*}
For this, take  $\lambda>0$, and consider $\varphi=\frac{T_\lambda(G_n(u_k))}{\lambda}$ as a test function in the first equation of \eqref{aprox} so that
\begin{align}\label{httf}
 \int_\Omega M(x) \nabla u_k \cdot \nabla u_k G'_n(u_k) \frac{T_\lambda'(G_n(u_k))}{\lambda} + \int_\Omega g(x,u_k,v_k) \frac{T_\lambda(G_n(u_k))}{\lambda} \leq \int_{\{u_k>n\}} f_k .
\end{align}
However, since $T_k$ and $G_n$ are both monotone, by using \eqref{P5} it is clear that
\[\int_\Omega M(x) \nabla u_k \cdot \nabla u_k G'_n(u_k) \frac{T_\lambda'(G_n(u_k))}{\lambda}\geqslant \alpha\int_\Omega |\nabla u_k|^2 G'_n(u_k) \frac{T_\lambda'(G_n(u_k))}{\lambda}\geqslant 0.\]
Further, by \eqref{P22} and by the very definition of $T_\lambda$ and $G_n$
\[\int_\Omega g(x,u_k,v_k) \frac{T_\lambda(G_n(u_k))}{\lambda}
\geqslant\int_{\{u_k>n+\lambda\}} g(x,u_k,v_k),
\]
Hence, by combining the latter estimates with \eqref{httf}, we get
\begin{align*}
\int_{\{u_k>n+\lambda\}} g(x,u_k,v_k)  \leqslant \int_{\{u_k >n\}} f,
\end{align*}
 and then, by letting $\lambda\to 0$, from \eqref{P1}, we have
\begin{align*}
\int_{\{ u_k >n\}} u_k^{r-1}v_k^{\theta+1} \leqslant C\int_{\{u_k>n\}} f.
\end{align*}
In this way, as a direct consequence of Lemma \ref{lem48}, for $s=t=2$ there follows that $ g(x,u_k,v_k) \to g(x,u,v) \ \ \mbox{in} \ L^1(\Omega)$, where clearly $2> \frac{N(\theta+1)}{N+\theta+1}$, for all $0<\theta<1$.

It remains to prove that
$
h(x,u_k,v_k) \to h(x,u,v) \ \mbox{in} \ L^1(\Omega).
$
First observe that we can consider up to subsequences relabeled the same, that $\frac{1}{\tau_k}<1$. Further, remark that by estimate \eqref{esti1} and H\"older's inequality,
\begin{align*}
    \int _E u_k^r &\leqslant \left(\int_E u_k^{r+\theta+1}\right)^\frac{r}{r+\theta+1}\cdot {\rm \mbox{meas}}(E)^{\frac{\theta+1}{r+\theta+1}}
    \\
    &\leqslant C \bigg(\|f\|^{\frac{r+\theta+1}{r+\theta}}_{L^m}+1\bigg)^{\frac{r}{r+\theta+1}}\bigg({\rm \mbox{meas}}(E)\bigg)^{\frac{\theta+1}{r+\theta+1}},
\end{align*}
and then ${u_k^r}$ is clearly uniformly integrable. Moreover, by \eqref{esti1} it is clear that \[\int_{\{v_k>n\}} u_k^r v_k^{\theta+1} \leq C \bigg(\|f\|_{L^m}^{\frac{r+\theta+1}{r+\theta}}+1\bigg),\]
clearly uniform with respect to $k$. Thence, once again by Lemma \ref{lem48} for once again $s=t=2$, we have
\begin{align*}
h(x,u_k,v_k) \to h(x,u,v) \ \mbox{in} \ L^1(\Omega).
\end{align*}
And also, it is trivial that
\[\int_\Omega \frac{1}{\tau_k}\psi\to 0.\]

Therefore, by letting $k\to +\infty$ in \eqref{aprox},  we arrive at
\begin{align*}
\int_{\Omega}\nabla u\cdot\nabla\varphi  + \int_{\Omega} g(x,u,v)\varphi = \int_{\Omega} f\varphi \ \ \ \ \forall\ \varphi \in C^\infty_c(\Omega)
\\
 \int_{\Omega}\nabla v\cdot\nabla\psi = \int_{\Omega} h(x,u,v)\psi \ \ \ \ \ \forall\ \psi \in C^\infty_c(\Omega),
\end{align*}
where $g(x,u,v)$, $h(x,u,v) \in L^1(\Omega)$ and the result follows.

%\newpage

\subsection{Proof of Theorem \ref{principal}}
This proof is analogous to the proof of Theorem \ref{lemah}, what allows us to omit certain details. In this fashion, from Lemma \ref{lem4.10},
there exist  $\{u_k\} \subset W^{1,p}_0(\Omega)$, $\{v_k\} \subset W^{1,q}_0(\Omega)$, and $u \in W^{1,p}_0(\Omega)$, $v \in W^{1,q}_0(\Omega)$ such that, up to subsequences relabeled the same,
\begin{align*}
	&u_k\rightharpoonup u \ \mbox{weakly} \ \mbox{in} \ W^{1,p}_0(\Omega), \ u_k \to u \ \mbox{in} \ L^{s_1}(\Omega), \ \mbox{and a.e. in} \ \Omega;\\
	\nonumber&v_k\rightharpoonup v \ \mbox{weakly} \ \mbox{in} \ W^{1,q}_0(\Omega), \ v_k \to v \ \mbox{in} \ L^{s_2}(\Omega), \ \mbox{and a.e. in} \ \Omega,
\end{align*}
for $p=\frac{2(r-\theta+1)}{r+\theta+1}$, $q=\frac{2N(1-\theta)}{N-2\theta}$, $u\geqslant 0$, $v\geqslant 0$ a.e. in $\Omega$, and
\begin{equation}
\label{weak}
\|u\|^p_{W^{1,p}_0}+\int_\Omega u^{r-\theta+1} + \int_\Omega u^r v^{1-\theta} \leqslant C\|f\|_{L^m}^{\frac{r-\theta+1}{r+\theta}} \mbox{ and }  \|v\|_{W^{1,q}_0}^q \leq C \|f\|_{L^m}^{\frac{N(r-\theta+1)}{(N-2\theta)(r+\theta)}},
\end{equation}
where $s_1< \max\{p^*,r-\theta+1\}$, $s_2< q^*$, and we obviously took advantage to the fact that $\tau_k \to \infty$ if $k\to \infty$.
At this point, let us stress that $p\geqslant q$ since $r\geqslant \frac{N+2}{N-2}$.

Once more,  we still have to prove  that $g(x,u_k,v_k)\to g(x,u,v)$ and $h(x,u_k,v_k)\to h(x,u,v)$ strongly in $L^1(\Omega)$. For this we employ Lemma \ref{lem48} for $s=p$ and $t=q$. Whereas the proof of the first convergence  is the same, i.e.,  we take $\varphi=\frac{T_\lambda(G_n(u_k))}{\lambda}$ in the first equation of \eqref{aprox}, and after the same computations analogous to the last case,  we get
\begin{align*}
\int_{\{u_k>n\}} u_k^{r-1}v_k^{\theta+1} \leqslant C\int_{\{u_k>n\}} f.
\end{align*}
and then, by \eqref{weak} and Lemma \ref{lem48} we obtain that $
g(x,u_k,v_k) \to g(x,u,v) \ \ \mbox{in} \ L^1(\Omega)$.

Finally, once again by \eqref{weak}  we know that $\{u_k^r\}$ is uniformly integrable. Indeed, note that by Lemma \ref{lem4.10}, since $0<\theta < \min \big (\frac{N+2}{3N-2}, \frac{4}{N-2},\frac{1}{2}\big)$,
\begin{align*}
    \int _E u_k^r &\leqslant \left(\int_E u_k^{r-\theta+1}\right)^\frac{r}{r-\theta+1}\cdot {\rm \mbox{meas}}(E)^{\frac{1-\theta}{r-\theta+1}}
    \\
    &\leqslant  C\bigg(\|f\|_{L^m}^{\frac{r-\theta+1}{r+\theta}}+1\bigg)^\frac{r}{r-\theta+1} {\rm \bigg(\mbox{meas}}(E)\bigg)^{\frac{1-\theta}{r-\theta+1}},
\end{align*}
and then, $\{u_k^r\}$ is uniformly integrable. Furthermore, by \eqref{esti2}, we also have that
\[ \int_{\{v_k>n\}}u_k^r v_k^{1-\theta} \leqslant C\bigg(\|f\|_{L^m}^{\frac{r-\theta+1}{r+\theta}}+1\bigg)\]
so that, once again by Lemma \ref{lem48}
$    h(u,u_k,v_k) \to h(x,u,v) \  \ \mbox{in} \ \ L^1(\Omega)$.

Therefore, by passing to the limit as $k\to+\infty$ in the first and second equations of \eqref{aprox},  we obtain that $(u,v)$ is a solution of \eqref{P}.

\section{Regularizing Effects}
 In this section there are presented the proofs of  Corollaries \ref{C2.4} and \ref{C2.6},
  where we inspect the conditions on the data determining  the presence or absence of regularizing effects of the solutions of \eqref{P}, i.e., in the light of Definition \ref{def2.1}, we show whether the solutions are Lebesgue or Sobolev regularized.
  Despite being direct, for the convenience of the reader we give the details .
\subsection{Proof of Corollary \ref{C2.4}}
\begin{itemize}
 \item [(A)]
    It is easy to see that
    $
         r+\theta +1 > 2^*\iff (r+\theta + 1)^{\prime}<(2^*)^{\prime}.
    $

    Thus, if $(r+\theta +1)^{\prime}<m< (2^*)^{\prime}$,  by Theorem \ref{lemah} we have $u \in W^{1,2}_0(\Omega) \cap L^{r+\theta+1}(\Omega)$. Moreover, since $r+\theta +1 > 2^*$ we obtain the following continuous immersion $L^{r+\theta+1}(\Omega) \subset L^{2^*}(\Omega)$. As $m < (2^*)^{\prime} \iff 2^* > m^{**}$, implies that  $L^{2^*}(\Omega)\subset L^{m^{**}}(\Omega)$ is the continuous immersion.
    Therefore, we have also a regularizing effect for the Lebesgue summability of the solution $u$.

 \item [(B)] Knowing that $r+\theta+1> 2^* \iff (r+\theta+1)^{\prime}<(2^*)^{\prime}$ and $m\geqslant (2^*)^{\prime}$, there follows  $m>(r+\theta+1)^{\prime}$, then by Theorem \ref{lemah} we get $u\in W^{1,2}_0(\Omega)\cap L^{r+\theta+1}(\Omega)$. Moreover, it is easy to see that $
    r+\theta+1 > m^{**}\iff m < \frac{N(r+\theta+1)}{N+2(r+\theta+1)}.
$
Therefore, we have a regularizing effect for the Lebesgue summability of the solution $u$.
\item [(C)]
By Theorem \ref{lemah} we know that $u \in L^{r+\theta+1}(\Omega)$ and $v\in L^{2^*}(\Omega)$, so it is easy to see that $u^r \in L^{\frac{r+\theta+1}{r}}(\Omega)$ and $|v|^\theta \in L^{\frac{2^*}{\theta}}(\Omega)$. Thus, by  interpolation, we get
\begin{align*}
    ||h(x,u,v)||_{L^s}= \Big(\int _{\Omega} |h(x,u,v)|^s\Big)^\frac{1}{s}\leqslant \Big(d^s_1 \int_{\Omega} u^{sr} v^{s\theta}  \Big)^\frac{1}{s} \leqslant d_2||u^r||_{L^{\frac{r+\theta+1}{r}}} || v^{\theta}||_{L^{\frac{2^*}{\theta}}} < \infty,
\end{align*}
where for $s \geqslant 1$
\begin{align*}
    \frac{1}{s} = \frac{r}{r+\theta+1} + \frac{\theta}{2^*} \implies s = \frac{2^*(r+\theta+1)}{2^*r+\theta(r +\theta + 1)}.
\end{align*}
It is enough to prove that $1\leqslant s < (2^*)^{\prime}$, or equivalently,
\[\frac{2^*}{2^* - \theta}\leqslant \frac{r+\theta+1}{r} < \frac{2^*}{2^* -(\theta+1)}\]
and thus $v$ is going to be Sobolev regularized.
For this, remark that, since $\Big(\frac{r+\theta+1}{r}\Big)^{\prime} = \frac{r+\theta+1}{\theta+1}$, $\Big(\frac{2^*}{2^*-\theta}\Big)^{\prime} = \frac{2^*}{\theta}$ and $\Big(\frac{2^*}{2^* - (\theta+1)}\Big)^{\prime}= \frac{2^*}{\theta+1}$, and by hypotheses, we have
\[2^*< r+\theta+1\leqslant\frac{2^*(\theta+1)}{\theta},\]
\end{itemize}
proving that $v$ is Sobolev regularized.
%\textcolor{blue}
\subsection{Proof of Corollary \ref{C2.6}}

\begin{itemize}
%    \item[(A)] First, remark that $u\in W^{1,p}_0(\Omega)$, by the Sobolev embedding we get $u\in L^{p^*}(\Omega)$. Thus, since $r-\theta+1>2^*>2^*(1-\theta)$, there follows
%$    p^*< r-\theta+1 \iff 2^*(1-\theta)< r-\theta+1$. Thus, in order to prove that $u$ is Lebesgue regularized, it is enough to contrast $r-\theta+1$ with $m^{**}$. Indeed,  observe that $m < (2^*)^{\prime} \iff m^{**} < 2^*.$ However, by hypothesis we know that $r-\theta+1 > 2^*$  which implies that $r-\theta+1> m^{**}$, that is,  $L^{r-\theta+1}(\Omega)\subset L^{m^{**}}(\Omega),$
 %strictly, and thence, $u$ is Lebesgue regularized.

\item[(A)] In order to show that $u$ is Sobolev regularized, remark that it is enough prove that $p>m^*$. Indeed, it is clear that
$     p>m^* \iff m < \frac{2N(r-\theta+1)}{N(r+\theta+1)+2(r-\theta+1)} $.
Thus, since $u\in W^{1,p}(\Omega)$ we obtain that $u$ is Sobolev regularized. Moreover, as it turns out, once again to prove that $u$ is Lebesgue regularized we shall concentrate in $L^{r-\theta+1}(\Omega)$. Indeed,  $p^*< r-\theta+1$, since $2^*(1-\theta)< r-\theta+1$. In this fashion, let us prove that under our hypotheses, $r-\theta+1>m^{**}$.
It is easy to see that $ r-\theta + 1 > 2^*(1-\theta) $ is equivalent to $
    \frac{N(r-\theta + 1)}{N+2(r-\theta+1)} > \frac{2N(r-\theta+1)}{N(r+\theta+1)+2(r-\theta+1)}$. Then, since
$     \frac{2N(r-\theta+1)}{N(r+\theta+1)+2(r-\theta+1)} > m$, we have
$  \frac{N(r-\theta+1)}{N+2(r-\theta+1)}> m$, what implies that $r-\theta+1> m^{**}$, and consequently, $u$ is Lebesgue regularized.

\item[(B)] Notice that $q^*=2^*(1-\theta)$, where $ q=\frac{2N(1-\theta)}{N-2\theta}$. Moreover, for  $p=\frac{2(r-\theta+1)}{r+\theta+1}$, by Theorem \ref{principal} we know that $u \in L^{r-\theta+1}(\Omega) \cap W^{1,p}_{0}(\Omega)$ and $v\in L^{q^*}(\Omega)$, or equivalently, $u^r \in L^{\frac{r-\theta+1}{r}}(\Omega)$ and $v^\theta \in L^{\frac{q^*}{\theta}}(\Omega)$. Now, by combining \eqref{P3} with the standard interpolation inequality for Lebesgue Spaces, it is obvious that
%\begin{align*}
%    |g(x,u,v)| \leqslant d_2|u|^r|v|^\theta %\ \mbox{a.e in } \ \Omega.
%\end{align*}
%Thus, by interpolation inequality
\[ h(x,u,v) \in L^s(\Omega)  \mbox{ where }s= \frac{2^*(r-\theta+1)(1-\theta)}{2^*r(1-\theta)+\theta(r-\theta+1)},\]
and $s\geqslant 1$ as a consequence of $r-\theta+1\geqslant \dfrac{2^*(1-\theta)^2}{\theta}.$

In order to show that $v$ is Sobolev regularized, remark that  $q> s^*$ is  equivalently to $ r-\theta+1> 2^*(1-\theta).$ Indeed, first remark that
\begin{equation}
\label{1550}
s=\dfrac{2N(1-\theta)(r-\theta+1)}{2N(1-\theta)r+(N-2)\theta(r-\theta+1)}
\end{equation}
and $q>s^*$ is equivalent to
\begin{equation}
\label{1553}
\dfrac{Nq}{N+q}>s \iff \dfrac{2N(1-\theta)}{N+2(1-2\theta)}>s.
\end{equation}
Then, by \eqref{1550} and \eqref{1553}, after certain basic computations, we have that $q>s^*$ is equivalent to
\begin{align*}
2N(1-\theta)r+(N-2)\theta (r-\theta+1) & > (N+2(1-2\theta))(r-\theta+1)\\
\iff&\\
2N(1-\theta)r > (r-\theta+1)&(N(1-\theta)+2(1-\theta))
\\
\iff&\\
(2^*)^\prime = \dfrac{2N}{N+2} &> \dfrac{r-\theta+1}{r}.
\end{align*}
However, since \[\bigg( \dfrac{r-\theta+1}{r} \bigg)^\prime = \dfrac{r-\theta+1}{1-\theta}\]
by the later equivalencies, we arrive at
\[q>s^* \iff \dfrac{r-\theta+1}{1-\theta}>2^*,\]
as we have claimed. In this way, since $v\in W^{1,q}_0(\Omega)$ it is clear that $v$ is Sobolev regularized.
\end{itemize}

%\section*{Acknowlegments}

%The first author was supported by CAPES - Brazil grant 88887.480897/2020-00 and  CNPq - Brazil grant 142279/2020-0. The second author was partially supported by CAPES - Print/Brazil 88887.469671/2019-00, FAPDF/Brazil grant 00193.00002176/2018-87, and FEMAT/Brazil grant 01/2023. The authors would like to express their gratitude to the anonymous referee for her/his careful reading and her/his constructive corrections and suggestions to the text. Thanks to her/his work, we were able to improve the present paper.

\bibliographystyle{plain}

\end{document}